% \affiliation[]{}%% 
%% Copyright 2019-2020 Elsevier Ltd
%% 
%% This file is part of the 'CAS Bundle'.
%% --------------------------------------
%% 
%% It may be distributed under the conditions of the LaTeX Project Public
%% License, either version 1.2 of this license or (at your option) any
%% later version.  The latest version of this license is in
%%    http://www.latex-project.org/lppl.txt
%% and version 1.2 or later is part of all distributions of LaTeX
%% version 1999/12/01 or later.
%% 
%% The list of all files belonging to the 'CAS Bundle' is
%% given in the file `manifest.txt'.
%% 
%% Template article for cas-sc documentclass for 
%% double column output.

%\documentclass[a4paper,fleqn,longmktitle]{cas-sc}
\documentclass[a4paper,fleqn]{cas-sc}

\usepackage[authoryear,longnamesfirst]{natbib}
\usepackage{caption}
\usepackage{subcaption}
\usepackage{amsmath}
\usepackage{amsthm}
\usepackage{float}
\usepackage{placeins}
\usepackage{bm}
\usepackage{color,soul}
\usepackage{hyperref} 
\usepackage{xcolor}
\DeclareUnicodeCharacter{00A0}{ }
% \colorbox{green}{highlight}
\DeclareMathOperator*{\argmax}{arg\,max}

\makeatletter
\AtBeginDocument{\let\hl\@firstofone}
\makeatother
%%%Author definitions
\def\tsc#1{\csdef{#1}{\textsc{\lowercase{#1}}\xspace}}
\tsc{WGM}
\tsc{QE}
\tsc{EP}
\tsc{PMS}
\tsc{BEC}
\tsc{DE}
%%%

% Uncomment and use as if needed
\newtheorem{theorem}{Theorem}
\newtheorem{lemma}[theorem]{Lemma}
\newtheorem{proposition}[theorem]{Proposition}
\newtheorem{definition}{Definition}
%\newdefinition{rmk}{Remark}
\newproof{pf}{Proof}
\newproof{pot}{Proof of Theorem \ref{thm}}

\usepackage[colorinlistoftodos]{todonotes}

\begin{document}
%\let\WriteBookmarks\relax
%\def\floatpagepagefraction{1}
%\def\textpagefraction{.001}

% Short title
% \shorttitle{Leveraging social media news}

% Short author
%\shortauthors{CV Radhakrishnan et~al.}

% Main title of the paper
\title [mode = title]{A Novel Max Pressure Algorithm Based on Traffic Delay}                      
% % Title footnote mark
% % eg: \tnotemark[1]
% \tnotemark[1,2]

% % Title footnote 1.
% % eg: \tnotetext[1]{Title footnote text}
% % \tnotetext[<tnote number>]{<tnote text>} 
% \tnotetext[1]{This document is the results of the research
%   project funded by the National Science Foundation.}

% \tnotetext[2]{The second title footnote which is a longer text matter
%   to fill through the whole text width and overflow into
%   another line in the footnotes area of the first page.}

% First author
%
% Options: Use if required
% eg: \author[1,3]{Author Name}[type=editor,
%       style=chinese,
%       auid=000,
%       bioid=1,
%       prefix=Sir,
%       orcid=0000-0000-0000-0000,
%       facebook=<facebook id>,
%       twitter=<twitter id>,
%       linkedin=<linkedin id>,
%       gplus=<gplus id>]
\author[1]{Hao Liu}[orcid=0000-0002-5319-9514]

% Corresponding author indication
\cormark[1]

% Footnote of the first author
% \fnmark[1]

% Email id of the first author
\ead{hfl5376@psu.edu}

% Address/affiliation
%\affiliation[1]{organization={The Pennsylvania State University},
%    city={University Park},
%    % citysep={}, % Uncomment if no comma needed between city and postcode
%    postcode={16802}, 
%    state={PA},
%    country={United States}}
\address[1]{Department of Civil and Environmental Engineering, The Pennsylvania State University, University Park, PA, USA}
% Second author
\author[1]{Vikash V. Gayah}
\ead{vvg104@psu.edu}
\begin{abstract}
This paper considers a novel travel-delay-based Max Pressure algorithm for control of arbitrary transportation networks with signalized intersections. The traditional number-of-vehicle-based Max Pressure (Original-MP) algorithm has received tremendous attention recently due to its ease of implementation and scalability to large network scenarios. The Original-MP algorithm also has a desirable property  called maximum stability, which means a demand scenario can be accommodated by this method as long as it can be accommodated by any existing control policy. However, implementation of the Original-MP Max Pressure algorithm may be difficult in practice as estimation of queue lengths at intersections requires significant measurement infrastructure. The Original-MP framework also uses a point queue model to represent the vehicle transition between links, which does not consider the position of the vehicles, even though this may  significantly impact the control performance. In addition, intersection approaches with  low travel demand can incur arbitrarily large delays due to having short queues. The proposed travel-delay based Max Pressure model overcomes these drawbacks while inheriting the maximum stability feature of the Original-MP. It also is shown to outperform several benchmark Max Pressure variants in a battery of simulation tests. Lastly, the proposed algorithm can be implemented in a Connected Vehicle (CV) environment, in which a subset of vehicles serve as mobile probes. The results show the proposed travel-delay-based model generates lower delay with non-full penetration rate than the benchmark models with full penetration rate under certain traffic conditions.

\end{abstract}

% % Use if graphical abstract is present
% % \begin{graphicalabstract}
% % \includegraphics{}
% % \end{graphicalabstract}

% % Research highlights
% % \begin{highlights}
% % \item Research highlights item 1
% % \item Research highlights item 2
% % \item Research highlights item 3
% % \end{highlights}

% % Keywords
% % Each keyword is seperated by \sep
\begin{keywords}
Max pressure control \sep Adaptive traffic signal optimization \sep Decentralized \sep Connected vehicles
\end{keywords}
\maketitle
\section{Introduction}
Traffic congestion has been a steadily growing problem for the urban network users and  is expected to only worsen over time if current trends continue \citep{jia2015survey}. Traffic signal control, which aims to optimize traffic conditions by coordinating the conflict vehicle movements at signalized intersections, serves as a cost effective strategy to alleviate congestion in urban networks. Although it has drawn a considerable amount of research efforts, traffic signal control is a complex problem and there are still many questions that need to be answered. One particularly  challenging topic is the network-wide traffic signal control, which increases in difficultly due to the  interdependence between adjacent  intersections within the network. However, centralized control methods that include direct  coordination between all network intersections are not scalable owing to the drastic increase in computational burden with the size of the network.

To address the scalability issue, decentralized models have been proposed, which only require local traffic states and optimize signal timing at each individual intersection separately. The Max pressure (MP) policy, also known as backpressure policy, is one more recently decentralized model that has  been investigated vastly since its proposal for traffic signal control during the last decade. The MP policy was initially presented in \citep{tassiulas1990stability} for the routing and scheduling of packet transmission in a wireless network. \citep{varaiya2013max}  proposed an MP control model in the context of a network of signalized intersections. This algorithm uses point queues to model the vehicle evolution on a link and uses queue length, which is the number of vehicles in the point queues, at each intersection approach to determine what signal phase to implement at that intersection. Specifically, the algorithm requires the queue lengths in adjacent links and the turning ratios between links at the intersection. Using this information, the weight for each potential movement is calculated as the difference between its own queue length and average queue length in all downstream links. The phase with maximum pressure, which is equal to the sum of weights of the served movements, is selected for implementation. In addition to decentralization, the favorable properties of this algorithm are twofold: 1) no need for prior knowledge on demand; and, 2) maximization of throughput, i.e., the MP control algorithm can accommodate any demand as long as there exists a control policy can accommodate it, which cannot be ensured by some classical control algorithms such as SCATS  \citep{sims1979sydney}, SCOOT \citep{robertson1991optimizing}, OPAC \citep{gartner2001implementation}, etc. 

Despite these desirable properties, the MP algorithm has assumptions that deviate from real traffic network, which limits its applicability in practical. Thanks to its decentralization and simplicity, different MP variations \citep{levin2020max, le2015decentralized, xiao2014pressure, gregoire2014capacity,  mercader2020max, wu2017delay, dixit2020simple} have been proposed to address certain limitations. However, none of these algorithms guarantee the optimization of certain metrics, such as delay, throughput, queue length, etc. The control performance in terms of such metrics highly depends on the selection of parameters used to calculate the weight and pressure. Most of the MP variants use queue lengths at the instant when signal is updated. However, since the vehicles are not distinguished by their moving status, i.e., the moving vehicles are treated the same as the stopped vehicles, which might result in poor performance under specific situations. In addition, even if it uses real queue length to address this issue, the difficulty of queue measurements may also limit the applicability of queue-based MP variants in practice. Instead, non-queue-based alternatives have been recently proposed. For example, \citep{mercader2020max} proposed a travel time-based MP (TT-MP) algorithm that uses travel time along approach links to calculate each movement's pressure. 

Minimization of delay, which is the additional travel time experienced by a driver compared to the `ideal' travel time at free flow and without obstructions, is one of the most commonly used objectives in traffic control problems.\citep{ji2012delay} \hl{proposed a delay-based MP (D-MP) schedule scheme for wireless networks to address the excessive delay incurred by flows due to the lack of subsequent packet arrivals. To the best of our knowledge, only two delay-based MP (D-MP) algorithms} \citep{wu2017delay, dixit2020simple} \hl{for traffic signal control have been proposed in the literature. However, both models have shortcomings, which will be discussed later in Section} \ref{sec:lr}, \hl{that may limit their practicability. This paper proposes a novel D-MP that can overcome these drawbacks.} The proposed model is analytically proved to own the key property of the MP in \citep{varaiya2013max}, i.e., maximum stability. In addition, the performance is explored through microscopic simulation, and the simulation results show that the proposed model outperforms three benchmark models in commonly studied metrics such as delay and throughput under various traffic conditions.

The rest of this paper is organized as follows. Section \ref{sec:lr} reviews MP policies that have been proposed. Section \ref{sec:q_tt} briefly introduces the basic mechanism of a MP model and two benchmark models. Section \ref{sec:d_mp} shows the development of the proposed D-MP and the proof of the maximum stability. The numerical case study results using SUMO are presented in Section \ref{sc:cs}, and we conclude in Section \ref{sc:cc}.

\section{Literature review}\label{sec:lr}
The MP algorithm was initially implemented in traffic signal control by \citep{varaiya2013max}. It is referred to as Original-MP in the remainder of this paper. In the Original-MP model, traffic operation is modeled using point queue models and queue capacity is assumed to be infinite, i.e., vehicles are never blocked by downstream queue spillover. The pressure is defined only based on the difference in queue lengths between upstream and downstream links; note that this does not consider priorities according to the absolute queue lengths, which can be disadvantageous. For example, intuitively, it should be more beneficial to serve a movement with a long upstream queue even if it does not have the largest weight because this can reduce the risk of spillover. These are the major bottlenecks that limit its practicability in real traffic signal systems. Thanks to its decentralization and simplicity, different variations of MP have been proposed to address these drawbacks. Table \ref{tab:lit} summarizes the MP variants in literature.
\begin{table}
	\caption{A summary of MP variants}
	\centering
	\begin{tabular}{p{3cm} p{3cm} p{3cm} p{5cm}}
		\Xhline{3\arrayrulewidth}
		\textbf{Study name} & \textbf{Pressure measure} & \textbf{Signal Update} & \textbf{Special feature/notes} \\
		\Xhline{3\arrayrulewidth}
		Max pressure \citep{varaiya2013max, lioris2016adaptive}	& Instant queue length & Every time step & Infinite link capacity;\newline maximum stability;\newline not work conservative\\
		Modified max pressure \citep{kouvelas2014maximum} & Instant queue length & Every cycle & Finite link capacity;\newline normalized queue length\\ % mercader(2020) argues that this model is not stable
		Pressure releasing policy \citep{xiao2014pressure} & Instant queue length & Every time step & Consider ingress queues in addition to the queues at the optimized intersection;\newline able to handle downstream spillover; \newline maximum stability within a reduced region\\
		MP with fixed cycle length \citep{le2015decentralized} & Instant queue length & Every cycle & Maximum stability;\newline fixed phase sequence; \newline able to adjust the green time difference\\
		Delay-based MP \citep{wu2017delay} & Sojourn time of the Head-Of-Line vehicle & Every time step & Maximum stability;\newline better fairness than Original-MP;\newline works for isolated intersections\\
		Position weighted backpressure \citep{li2019position} & Instant vehicle spatial distribution & Every time step & weight depends on vehicle locations;\newline Work conservative;\newline strong stability\\
        Travel time based MP \citep{mercader2020max} & Normalized average travel time during a cycle & Every cycle & Normalized travel time; \newline work conservative\\
        MP with maximum cycle length \citep{levin2020max} & Instant queue length & Every time step & Cycle length is upper bounded;\newline fixed phase sequence;\newline rolling horizon scheme;\newline maximum stability within a reduced region\\
        \hl{Delay-based MP} \citep{dixit2020simple} & Travel delay & Every cycle & Cycle length is fixed;\newline fixed phase sequence\\
		\Xhline{4\arrayrulewidth}
	\end{tabular}\label{tab:lit}
\end{table}
\citep{xiao2014pressure} proposed a so-called "pressure releasing policy" (PRP) to consider queue capacities, and it was proved that the stability is maintained with a reduction on the throughput region. In addition to the adjacent queue lengths at an intersection, the PRP also takes the ingress queues (queues in the entry links) into consideration for the definition of phase pressure. Instead using the absolute queue lengths, \citep{kouvelas2014maximum, wei2019presslight} used the ratio between the absolute queue length and the queue capacity as the pressure to consider the effect of link length. \citep{gregoire2014capacity} proposed a new MP policy in which the queue length is normalized by a convex function of queue capacity. The study claims that new vehicles joining a queue can be more problematic as the queue grows. The strict convexity of the normalization is capable of increasing marginal pressure due to an additional vehicle with the rising of queue. The model was proved to be work conservative, which can reduce the probability of blocking. It was also shown that the capacity-aware max pressure model outperforms the Original-MP mainly under heavy traffic loads. Although the queue capacity is considered, the work conservation is only valid for certain circumstances in terms of the spatial distribution of vehicles due to the use of point queue model. To address this issue, \citep{li2019position} proposed a position-weighted backpressure (PWBP) model  that is able to capture the spatial distribution of vehicles and spill-back dynamics. 

Signal updating frequency plays another important role on the control policy. In the Original-MP, pre-defined discrete time periods are used and the signal phase with the maximal pressure is activated at the beginning of every period. \citep{lioris2016adaptive} numerically tested the effect of five update frequencies on control performance in terms of sum of queue lengths in the network and found increased frequency of control updates leads to diminished network queues. However, these tests only considered a certain demand scenario and the time period selection was thus not comprehensive. In contrast, we will numerically show that there exists a threshold for the update frequency above which the control performance can be deteriorated. Under this updating fashion, the signal timing including both phase sequence and duration from this policy could be undesirable since the disorder of phases brings frustrations to drivers and the waiting time for the approach with small demand can be arbitrarily long. As an alternative, another type of policy was proposed, cycle-based MP algorithms \citep{le2015decentralized, kouvelas2014maximum}, that allocates effective green time to each phase based on the ratios of pressures at the beginning of each cycle. However, the cycle length in all these models are fixed. \citep{levin2020max} proposed a model with fixed phase sequence and maximal cycle length. The cycle lengths between intersections can be different, but the effect of this discrepancy on control performance is still open to answer.

All above models use queue lengths to calculate weight and pressure. However, as mentioned before, the queue length in the Original-MP is equal to the number of vehicles rather than the stopped vehicles. In addition, queue length might be more difficult than other metrics to be obtained, as argued in \citep{mercader2020max}. \hl{Instead, other measures might be more readily available via floating vehicles or mobile vehicle probes. For example,} \citep{dixit2020simple} \hl{acquired high-quality real-time traffic delay at a cheaper cost compared to physical sensors, which are required for queue measurement. The delay data was used as the input of their model of which the feasibility has been demonstrated at real intersections.} In addition, with the rapid development and popularization of the connected vehicle (CV) technology, metrics like travel time and delay can be estimated with an acceptable accuracy from the information sent from the CVs even in a relatively low penetration rate. However, an accurate estimation on queue length requires a high coverage rate of detectors or infrastructure, which is usually not available in reality. Even in a CV environment, although the queue length can be estimated given the penetration rate, the error could be considerable especially when the penetration rate is low. Therefore, it is necessary to explore the MP variants based on different metrics. Several variants proposed non-queue length metrics that can be used to calculate the pressure. \citep{mercader2020max} proposed a travel time-based MP (TT-MP) that is inherently capacity-aware, meaning the marginal increase in travel time with a long queue is higher than a short queue. It was shown that the proposed model has a lower unstable probability than the queue based model in \citep{kouvelas2014maximum}. \citep{le2015decentralized} pointed out that one drawback of the Original-MP is that the delay of one movement with low travel demand can be arbitrarily large. Delay-based models can resolve this issue since the movement with the largest delay will be served right away. Moreover, delay is also inherently capacity-aware, i.e., the marginal delay increases with queue length. Therefore, compared to the Original-MP which only relies on the difference in queue lengths,  delay-based models can reduce the risk of spillover. The reason is that delays increases faster as queue length grows. Therefore, the delay based MP tends to activate a phase with a long upstream queue even it does not own the largest queue length-based weight. In addition, traffic delay might be the mostly used metric to indicate traffic operational performance, so the delay-based MP is a promising topic to be studied. Very few delay-based MP variants exist in the literature. A delay-based MP using Head-Of-Line (HOL) delay information for pressure was proposed in \citep{wu2017delay} to ensure the fairness with respect to mean delay between different movements; however, the algorithm proposed in that study  only works for isolate intersections. \citep{dixit2020simple} \hl{proposed a parsimonious cycle-based D-MP that uses delay information provided by crowdsourced data to calculate the optimal green time allocation.  The main drawback of this model is that it can only relates long-term average queue lengths and delays, but it cannot describe the relationship between queue lengths and delay at an arbitrary time. As a result, this model cannot verify the stable region or prove the maximum stability property, which is one of biggest strengths of the MP algorithm. In addition, the performance of their model was not clear compared to other MP variants under the conditions with or without a full knowledge about the traffic state, e.g., a fully or partially connected vehicle environment.} Overall, the study of   delay-based MP variants is inadequate. 

\hl{Triggered by all the motivation above, this paper proposes a novel delay-based MP (D-MP) that works for networks and inherits the maximum stability property.} In our model, vehicles on a link are divided into two groups: moving vehicles and stopped vehicles. The traffic operation is modeled using a store-and-forward model, and total delay is defined as the sum of stopped time over all vehicles on a link. We demonstrated that the proposed model inherits the most desirable property of the Original-MP: maximum stability. In addition, the simulation results show the proposed model outperforms three benchmark MP variants (Original-MP, TT-MP and a version of the Original-MP that considers only stopped vehicles, termed the H-MP in this paper) under various traffic conditions. Moreover, we also studied the control performance of the proposed model in a connected vehicle (CV) environment. Before we move to the next section, we clarify and emphasize the terms for measurements used in this paper. Before moving to the next section, we need to clarify the definition of `queue' in this paper. Due to the usage of point queue models, the original-MP algorithm and some MP variants mentioned in this section use the term `queue' to represent the total number of vehicles. However, the proposed model in this paper divides vehicles into two groups: moving vehicles and stopped vehicles, so keeping this definition may lead to confusion. To avoid the confusion, in the remaining of this paper, `queue' is used to indicate the stopped vehicles (i.e,. those that have traveled through the link and have joined the queue at the signal), and `the number of vehicles' is used to represent the total vehicles.

\section{Original-MP and total travel time-based MP models}\label{sec:q_tt} 
In this section, we briefly introduce the Original-MP model proposed in \citep{varaiya2013max} and the TT-MP proposed in \citep{mercader2020max} as the first two benchmark models in this paper. First, we define some terms that are used in the remainder of this paper. The directional road segment between two adjacent intersections is called a link. A pair of links $(l,m)$ that vehicles are allowed to travel from link $l$ and $m$ by the signal setting is called a movement. The term of phase is used to indicate a set of movements that are served by the same signal duration. In general, the basic idea of a MP model is that to first calculate the weight of a movement as the difference of a selected metric, such as the number of vehicles, between this movement and the average value in its downstream movements. Then, the pressure of a phase is the sum of the weights over all movements served by this phase. Next, the phase with the maximum  pressure is activated to allow flows with the highest overall pressure to proceed. Note that the time step for updating traffic signals, denoted by $T$, and the time step for updating traffic dynamics  can be different. For example, many MP models use free flow travel time on a link as the time step to update signal phases, but the time step for traffic simulation is usually shorter. 

Let $x(l,m)$ denote the metric used to calculate the weight, $C(l,m)$ be the mean value of saturation flow of movement $(l,m)$, $H(l,m)$ indicate the turning ratio from link $l$ to $m$, $\mathbb{O}_l$ be the set of downstream links of link $l$ and $\mathbb{S}_j$ represent the set of movements served by phase $j$. The general forms of the weight of a movement $w(l,m)$ and the pressure of phase $j$ $p(S_j)$ at time $t$ can be expressed as Equations \eqref{eq:weight1} and \eqref{eq:pressure1},
\begin{equation}\label{eq:weight1}
    w(l,m)(t) = x(l,m)(t)-\sum_{n\in \mathbb{O}_m}x(m,n)H(m,n)
\end{equation}
\begin{equation}\label{eq:pressure1}
    p(S_j)(t)=\sum_{(l,m)\in \mathbb{S}_j}C(l,m)(t)w(l,m)(t)
\end{equation}
Then, for each intersection, the MP algorithm selects the phase with the largest pressure to activate, shown as
\begin{equation}\label{eq:mp1}
    S^*(t)=\argmax_jp(S_j)(t)
\end{equation}

In the Original-MP, the traffic flow on a link is modeled using point queue model in which the lengths of both the link and queue are ignored and the queue capacity of a link is assumed to be infinite. The current number of vehicles at time $t$ is used in the weight calculation. As shown in Equations \eqref{eq:weight1}-\eqref{eq:mp1}, the model requires the mean values of saturation flow of each movement and turning ratios at each intersection, both of which can be allowed to vary depending on traffic operation. The desirable properties of the Original-MP includes: it is a decentralized control framework so the solution can be obtained very fast; it does not require the knowledge about demand; it can serve all demand if the demand can be served by any signal plan, which is called maximum stability. The third property is the most important and will be explained in detail in the next section.

As is shown in Equation \eqref{eq:weight1}, the weight is a linear function of the number of vehicles. However, in reality, it might be more reasonable to activate a movement with more vehicles even if the corresponding phase does not have the largest weight to avoid queue spillover. To this end, \citep{mercader2020max} proposed a cycle-based TT-MP in which the travel time in a cycle is used to define the weight. The authors show that the travel time is a convex function of maximum of queue length in a signal cycle, and this convexity ensures the marginal weight increases as the queue grows; thus, the  proposed model gives more priority to congested links. Although TT-MP does not guarantee the maximum stability, the authors proved the model is work conservative, i.e., the active phase serves at least one vehicle unless there is no vehicle in the upstream links or all downstream links are blocked. Since the proposed model in this paper is time step-based, i.e., the signal timing is updated every time step, we use the time step-based TT-MP as another benchmark model, in which the total travel time incurred in time interval $[(n-1)T,nT]$ is used to calculate the weight for $n$th time step. Proposition \ref{pro:tt} shows that total travel time in a period is equivalent to the average number of vehicles. Therefore, while Original-MP uses the number of vehicles at a point in time to define the weight and pressure, the essence of TT-MP is to employ the average value of the same metric. Unlike the instant number of vehicles which only provides the traffic state at a time point, total travel time conveys more detailed information during the whole time interval, and it is expected to lead better performance, which will be demonstrated in the case study.

\begin{proposition}\label{pro:tt}
Total travel time in a certain period is equivalent to the average number of vehicles. 
\end{proposition}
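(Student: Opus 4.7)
The plan is to derive a direct identity between cumulative travel time on a link over an interval and the time-integral of the number of vehicles on that link, which is exactly the average queue length (up to the multiplicative constant $T$ that cancels in the weight/pressure formulas).

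First I would set up notation carefully. Fix a link $l$ and the signal interval $[(n-1)T, nT]$. Let $N_l(t)$ denote the number of vehicles present on link $l$ at time $t$ (this is the point-queue state used by Q-MP). Let $\mathcal{V}_l$ be the set of vehicles that occupied link $l$ at some instant during $[(n-1)T, nT]$, and for each $i\in\mathcal{V}_l$ let $[a_i,b_i]\cap[(n-1)T,nT]$ be the sub-interval during which vehicle $i$ was on the link. Then the travel time contributed by vehicle $i$ during the interval is $\tau_i = \min(b_i,nT) - \max(a_i,(n-1)T)$, and the total travel time incurred on link $l$ in that interval is $TT_l = \sum_{i\in\mathcal{V}_l}\tau_i$.

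Next I would apply the standard cumulative-count identity (essentially Little's formula in its pathwise/deterministic form): because $N_l(t)=\sum_{i\in\mathcal{V}_l}\mathbf{1}_{\{t\in[a_i,b_i]\}}$, Fubini/Tonelli gives
\begin{equation*}
\int_{(n-1)T}^{nT} N_l(t)\,dt \;=\; \sum_{i\in\mathcal{V}_l}\int_{(n-1)T}^{nT}\mathbf{1}_{\{t\in[a_i,b_i]\}}\,dt \;=\; \sum_{i\in\mathcal{V}_l}\tau_i \;=\; TT_l.
\end{equation*}
Dividing both sides by $T$ yields $\bar{N}_l = TT_l/T$, i.e.\ the time-averaged queue length on link $l$ during the signal interval equals the total travel time divided by $T$.

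Finally I would observe that in the weight expression \eqref{eq:weight1} and the pressure \eqref{eq:pressure1}, substituting $x(l,m)=TT_l$ versus $x(l,m)=\bar N_l$ differs only by the common positive constant $T$. Since the argmax in \eqref{eq:mp1} is invariant under multiplication of all pressures by a positive constant, the two metrics induce identical phase selection, which is the sense in which they are ``equivalent.'' The main obstacle I anticipate is bookkeeping for boundary vehicles—those that enter before $(n-1)T$ or leave after $nT$—but this is handled cleanly by the truncation $\tau_i = \min(b_i,nT)-\max(a_i,(n-1)T)$ above, so no essential difficulty remains; the result is really just a restatement of Little's identity on a finite window.
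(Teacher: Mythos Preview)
Your argument is correct and is essentially the paper's own argument: both count vehicle-seconds two ways, the paper doing it discretely by fixing a 1-second traffic-operation step (so travel time accrued in each second equals the instantaneous queue length, and summing over the $T$ seconds of the signal interval gives total travel time $=T\cdot$average queue) while you give the continuous Fubini/indicator version of the same identity. Your closing remark that the common positive factor $T$ drops out of the $\argmax$ in \eqref{eq:mp1} is exactly the intended sense of ``equivalent.''
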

\begin{proof}
Without loss of generality, assume the time step for traffic operation is 1 second. Then, the total travel time in each time step is equal to the number of vehicles. Therefore, the total travel time in any period is equal/equivalent to the total/average number of vehicles.
\end{proof}

\section{Proposed delay-based MP model}\label{sec:d_mp}
Since the Original-MP always activates the phase with the largest difference in the number of vehicles between the upstream and the downstream links, one drawback is that the delay of one movement with low travel demand can be arbitrarily large \citep{le2015decentralized}. In addition, the minimization of traffic delay, which is the additional travel time above the travel time a vehicle would experience at free flow, is the most common objective in traffic signal control problems. To this end, we propose a delay-based MP (D-MP) model, in which total traffic delay incurred in $[(n-1)T, nT]$ is used to define the weight in a MP algorithm. Note, although delay is used to compute the weights and pressure, it does not guarantee the minimization of delay. In fact, none of the proposed MP models in the literature ensures the optimization of any metric such as delay, throughput, etc. However, the proposed model is demonstrated to outperform both the Original-MP and TT-MP under various traffic conditions in the case study section. More importantly, the improvement in the control performance does not sacrifice the key merits of the original MP including: decentralization, no need for the knowledge of mean demand and the maximum stability property.

The notations used in the following are summarized in Table \ref{tab:para}.
 \begin{table}[t]
	\caption{Notations}
	\centering
	\begin{tabular}{p{3cm} p{10cm}}
		\Xhline{3\arrayrulewidth}
		\textbf{Notation} & \textbf{Definition} \\
		\Xhline{3\arrayrulewidth}
		\multicolumn{2}{c}{\textbf{Sets}}\\
		$\mathbb{L}$ & set of links \\
		$\mathbb{L}_e$ & set of entry links \\
		$\mathbb{L}_{in}$ & set of links except for internal links\\
		$\mathbb{I}_l$ & set of incoming links of link $l$\\
		$\mathbb{O}_l$ & set of outgoing links of link $l$\\
		$\mathbb{D}_f$ & set of feasible demand\\
		$\mathbb{V}_s(l,m)(t)$ & set of vehicles of movement (l,m) at time t\\
		$\mathbb{V}_s(l,m)(t)$ & set of stopped vehicles of movement (l,m) at time t\\
		$\mathbb{V}_m(l,m)(t)$ & set of moving vehicles of movement (l,m) at time t\\
		\Xhline{4\arrayrulewidth}
		\multicolumn{2}{c}{\textbf{Parameters}}\\
		$x(l,m)(t)$ & number of vehicles in movement (l,m) at time $t$\\
		$x_s(l,m)(t)$ & number of stopped vehicles in movement (l,m) at time $t$\\
		$x_m(l,m)(t)$ & number of moving vehicles in movement (l,m) at time $t$\\
		\hl{$x_{m,s}(l,m)(t)$} & number of moving vehicles in movement (l,m) at time $t$ that join the stopped vehicles in the next step\\
		$T$ & time step size for signal updating\\
		\hl{$x_{total}^{dis}(l,m)(t)$} & number of discharged vehicles in movement (l,m) between [t, t+1]\\
		\hl{$x_{s}^{dis}(l,m)(t)$} & number of stopped vehicles that can be discharged in movement (l,m) between [t, t+1]\\
		$H(l,m)$ & turning ratio from link l to link m\\
		$d(l,m)$ & demand for movement (l,m) if $l\in \mathbb{L}_e$\\
		$f_l$ & average traffic volume for link $l$\\
		$C(l,m)(t)$ & saturation flow for movement (l,m) at time $t$\\
		$c(l,m)$ & mean of the saturation flow for movement (l,m)\\
		$\bar{C}(l,m)$ & maximum of the saturation flow for movement (l,m)\\
		$e_i(t)$ & travel distance for vehicle $i$ in the time step $t$\\
		$\bm{e}(l,m)(i)$ & travel distance for movement (l,m) in the $i$th time interval\\
		$\bm{tt}(l,m)(i)$ & total travel time for movement (l,m) in the $i$th time interval\\
		$b_i(t)$ & traffic delay for vehicle $i$ in the time step $t$\\
		$B(l,m)(t)$ & traffic delay for movement (l,m) in the time step $t$\\
		$\bm{B}(l,m)(i)$ & traffic delay for movement (l,m) in the $i$th time interval\\
		$S_{ij}(t)$ & 1 if phase $j$ at intersection $i$ is activated, 0 otherwise.\\
		$S(l,m)(t)$ & 1 if phase serving movement (l,m) is activated, 0 otherwise.\\
		$v_f$ & free flow speed\\
		$\Delta t$ & time step size for traffic operation\\
		$w(l,m)(t)$ & delay based weight of movement (l,m) at time $t$\\
		$w'(l,m)(t)$ & number of vehicle-based weight of movement (l,m) at time $t$\\
		$p(S_i)$ & pressure at intersection $i$\\
		\Xhline{4\arrayrulewidth}
		\multicolumn{2}{c}{\textbf{Functions}}\\
		$\bm{V}(l,m)(t)$ & distribution of vehicles in link (l,m) at time $t$\\
		$\Psi_1$ & a function mapping the distribution of vehicles to the number of vehicles joining the stopped vehicles\\
		$\Psi_2$ & a function mapping the distribution of vehicles to the maximal number of departing vehicles\\
		\Xhline{4\arrayrulewidth}
	\end{tabular}\label{tab:para}
\end{table}

\subsection{Traffic evolution}
As the same as Original-MP and TT-MP, the traffic operation is modeled using a store-and-forward model. The discharge of a point queue is expressed as
\begin{equation}\label{eq:discharge}
    x_{dis}(l,m)(t+T)=\min\{C(l,m)(t+T)S(l,m)(t)T, x(l,m)(t)\}
\end{equation}
Equation \eqref{eq:discharge} shows the discharge in one time step is equal to the minimum of the number of vehicles that can discharge at saturation during a single time step and the number of vehicles on the link at the beginning of the time step. Consequently, although the travel time is not considered in a point queue, the reasonable time step size in traffic environment is restricted to be equal to the free flow travel time traversing the link, like in \citep{levin2020max}. If $T$ is longer than the free flow travel time $t_f$,  new incoming vehicles from the upstream link of link $l$ could potentially join the stopped vehicles and discharge in the same step, so the discharge is underestimated by the term of $x(l,m)(t)$ in the bracket. On the other hand, if $T$ is shorter than $t_f$,  vehicles near the upstream end of link $l$ are not able to reach the stop line in the current step, so the term of $x(l,m)(t)$ will overestimate the discharge.

To replace the number of vehicles with traffic delay, we categorized  vehicles on a link as being either  stopped vehicles or  moving vehicles; see  Figure \ref{fig:traffic_evo} in which the open-end box at the downstream end of a link indicates the queue and the number inside is the number of stopped vehicles at time $t_0$, blue vehicles are moving at step $t_0$ and stopped at $t_0+1$, and green vehicles are moving in both time steps. Furthermore, the following assumptions are made:\begin{itemize} \item Like the Original-MP, the queue capacity of each link is assumed to be infinite, i.e., the queue length is ignored, as shown in Figure \ref{fig:traffic_evo}. This assumption ignores the impact of downstream traffic state on the discharge from upstream links and allows the number of stopped vehicles to increase arbitrarily; \item At any time $t$, there exists at most one group of stopped vehicles on a link and the group is at the stop line at the intersection, as shown in Figure \ref{fig:traffic_evo}; \item Moving vehicles always travel at free flow speed before they join the stopped vehicles, i.e., moving vehicles do not incur delay; \item The number of moving vehicles on a link is upper bounded by a constant. The reason is that a moving vehicle will either join the stopped vehicles or leave the link within a constant time (free flow travel time on the link). On the contrary, the number of stopped vehicles is allowed to rise  infinitely. This assumption plays a key role in the maximum stability, and we prove this assumption in Proposition \ref{pro:upperbound_moving}.  \end{itemize} Next, we show the traffic flow models for both groups. Figure \ref{fig:traffic_evo} provides the traffic evolution on an activated movement between two consecutive steps for three different scenarios and is used to help explain the models.\\
The store-and-forward model for stopped vehicles can be expressed as,
\begin{equation}\label{eq:stop_evo}
    x_s(l,m)(t+1)=x_s(l,m)(t)+x_{m,s}(l,m)(t)-x_s^{dis}(l,m)(t), \forall l\in \mathbb{L}, m\in \mathbb{O}_l
\end{equation}
where
\begin{equation}
	\begin{split}
	    x_{m,s}(l,m)(t) & = \Psi_1(\boldsymbol{V}(l,m)(t))\\
	    x_s^{dis}(l,m)(t) & = \min\{C(l,m)(t+1)S(l,m)(t+1), x_s(l,m)(t)+\Psi_1(\boldsymbol{V}(l,m)(t))\}
	\end{split}
\end{equation}
\hl{$x_{m,s}(l,m)(t)$ is the number of moving vehicles at $t$ that join the stopped vehicles at $t+1$; $x_s^{dis}(l,m)(t)$ is the number of stopped vehicles that discharge in the time step.} $\boldsymbol{V}(l,m)(t)$ is a time-dependent variable indicating the distribution of vehicles in link $(l,m)$ at time $t$. $\Psi_1$ is a function mapping the distribution of vehicles to $x_{m,s}(l,m)(t)$, as shown by blue vehicles in Figure \ref{fig:traffic_evo}. Note given the assumption that moving vehicles always travel at free flow speed before they stopped, the form of $\Psi_1(.)$ is only related to the free flow speed (although the form can be complex). For simplicity, we assume the free flow speed is a constant, so the form of $\Psi_1(.)$ is time-invariant. This source does not only depend on the number of vehicles in both groups, it also depends on the distribution of the vehicles in the link. For example, the top and middle lanes have the same number of vehicles in each group, but two vehicles on the top lane will join the stopped vehicles, while only one vehicle will in the middle lane because other vehicles are too far away to join the stopped vehicles in the next step. \hl{In addition, since this function denotes the number of vehicles joining the stopped vehicles in one time step, we assume $\Psi_1$ is finite.} $x_s^{dis}(l,m)(t)$ is equal to the minimum of the number of vehicles that can discharge at saturation during a time step and the number of stopped vehicles present. Note this term only counts the departed vehicles that have stopped before leaving the link. The total served vehicles can exceed this term if moving vehicles traverse the link without stops, as shown in the bottom lane in Figure \ref{fig:traffic_evo}, and that portion of outflow will be considered in the evolution of moving vehicles. Equation \eqref{eq:move_entry_evo} shows the evolution of moving vehicles in an entry link,
\begin{equation}\label{eq:move_entry_evo}
    x_m(l,m)(t+1)=x_m(l,m)(t)-x_{m,s}(l,m)(t)-[x_{total}^{dis}(l,m)(t) - x_s^{dis}(l,m)(t)]+d(l,m)(t+1), \forall l\in \mathbb{L}_e,  \mathbb{O}_l
\end{equation}
where
\begin{equation*}
x_{total}^{dis}(l,m)(t) = \min\{C(l,m)(t+1)S(l,m)(t+1), \Psi_2(\boldsymbol{V}(l,m)(t))\}
\end{equation*}
\hl{$x_{total}^{dis}(l,m)(t)$ is the total number of departed vehicles.} $\Psi_2$ is a function mapping the vehicle distribution to the maximum number of vehicles that can leave the link. Therefore, the difference in the square bracket indicates the number of moving vehicles that depart the link without stops. \hl{Since $x_{total}^{dis}(l,m)(t)$ is always upper bounded by $C(l,m)(t+1)S(l,m)(t+1)$, we do not need $\Psi_2$ to be finite.} In Figure \ref{fig:traffic_evo}, this term  is zero for the top and middle lanes, but it is equal to one for the bottom lane since one vehicle is able to  leave the link without stopping. The last term is the demand for the entry link at time $t+1$.

Similarly, the moving vehicles for the internal links can be expressed as,
\begin{equation}\label{eq:move_in_evo}
    \begin{split}
        x_m(l,m)(t+1)=&x_m(l,m)(t)-x_{m,s}(l,m)(t)-[x_{total}^{dis}(l,m)(t) - x_s^{dis}(l,m)(t)] \\
        & +\sum_{k\in\mathbb{I}_l}x_{total}^{dis}(l,m)(t)H(k,l)(t+1), \forall l\in \mathbb{L}_{in},  \mathbb{O}_l
    \end{split}
\end{equation}
The last term indicates for internal links, the source for the moving vehicles is the outflow from upstream links. 
\begin{proposition}\label{pro:upperbound_moving}
The number of moving vehicles on a link is upper bounded by a constant if a moving vehicle will either join the stopped vehicles or leave the link after a constant time (free flow travel time on the link). 
\end{proposition}
\begin{proof}
For concision, Equation \eqref{eq:move_in_evo} can be reformed as
\begin{equation}
    x_m(l,m)(t+1)=x_m(l,m)(t)-x_m^-(l,m)(t+1)+\sum_{k\in\mathbb{I}_l}x_{total}^{dis}(l,m)(t)H(k,l)(t+1), \forall l\in \mathbb{L}_{in},  \mathbb{O}_l
\end{equation}
where $x_m^-(l,m)(t+1)$ is the number of moving vehicles that join the stopped vehicles or depart link $(l,m)$, i.e., the sum of the second and the third terms in Equation \eqref{eq:move_in_evo}. Then, for any $t_1>1$, we have
\begin{equation}\label{eq:moving2}
    x_m(l,m)(t+t_1)=x_m(l,m)(t)-\sum_{t'=1}^{t_1}x_m^-(l,m)(t+t')+\sum_{t'=1}^{t_1}\sum_{k\in\mathbb{I}_l}x_{total}^{dis}(l,m)(t+t'-1)H(k,l)(t+t'), \forall l\in \mathbb{L}_{in},  \mathbb{O}_l
\end{equation}
Since we assume the moving vehicles will join the stopped vehicles or leave the link after the free flow travel time (this could be realized by functions $\Psi_1$ and $\Psi_2$), $t_f$, we have
\begin{equation}\label{eq:moving3}
    x_m^-(l,m)(t+t_f) \ge  \sum_{k\in\mathbb{I}_l}x_{total}^{dis}(l,m)(t-1)H(k,l)(t), \forall l\in \mathbb{L}_{in},  \mathbb{O}_l, t
\end{equation}
Therefore, combining Equation \eqref{eq:moving2} and Equation \eqref{eq:moving3}, for any $t_1>t_f$, we have
\begin{equation}
    x_m(l,m)(t_1)<x_m(l,m)(0)+\sum_{t'=t_1-v_f+1}^{t_1}\sum_{k\in\mathbb{I}_l}x_{total}^{dis}(l,m)(t+t'-1)H(k,l)(t+t'), \forall l\in \mathbb{L}_{in},  \mathbb{O}_l
\end{equation}
From Equation \eqref{eq:move_entry_evo}, we know $x_{total}^{dis}(l,m)(t+t'-1)H(k,l)(t+t')$ is upper bounded by a constant (saturation flow). Therefore, $x_m(l,m)(t)$ is upper bounded by a constant for any time $t$ as well.

This can be proved for the entry links in a similar way.
\end{proof}

Note that the evolution of the aggregated number of vehicles, shown as Equation \eqref{eq:agg}, is the same as the flow model in the Original-MP.

\begin{equation}\label{eq:agg}
    x(l,m)(t)=x_s(l,m)(t)+x_m(l,m)(t)
\end{equation}
\begin{figure}
    \centering
    \includegraphics[width=6in]{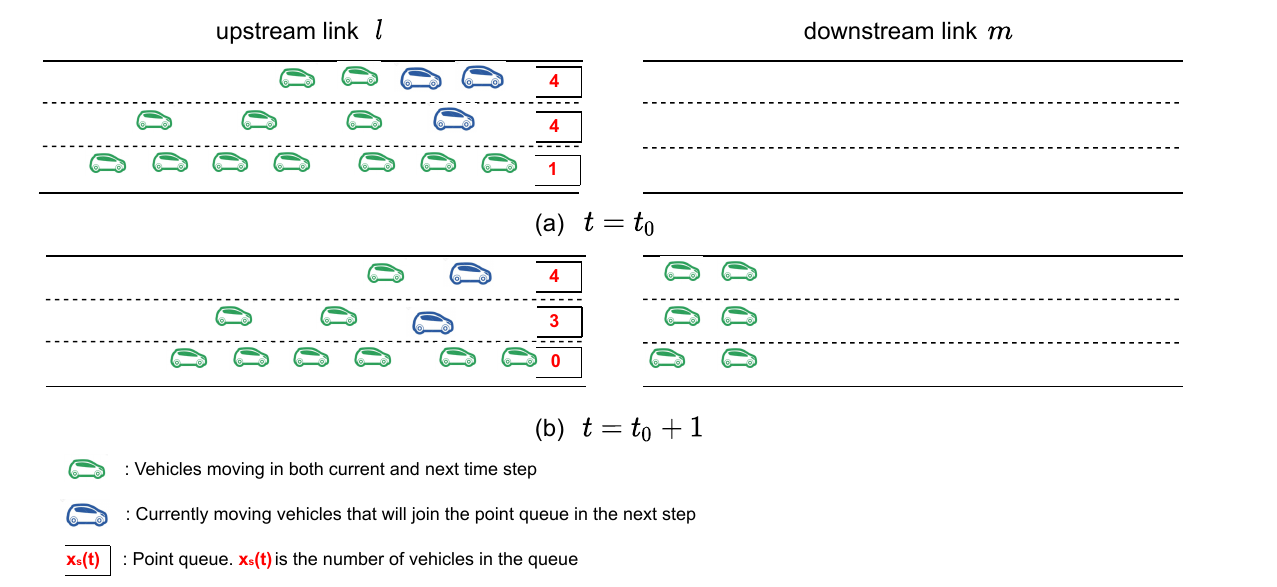}
    \caption{Evolution of traffic.}
    \label{fig:traffic_evo}
\end{figure}

\subsection{Delay based max pressure model}
This section demonstrates the proposed delay-based control policy using the store-and-forward model. First, we claim that total delay in a time step is equivalent to the number of stopped vehicles.

\begin{proposition}
Total travel delay generated on a link in a time step, $B(l,m)(t)$, is equivalent to the number of stopped vehicles, $x_s(l,m)(t)$. 
\end{proposition}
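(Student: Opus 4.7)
The plan is to reduce the claim to a per-vehicle, per-timestep accounting exercise that mirrors the style of Proposition \ref{pro:tt}. I would first normalize the time step for traffic operation to $\Delta t = 1$ second, just as was done in Proposition \ref{pro:tt}, so that the statement $B(l,m)(t) \equiv x_s(l,m)(t)$ is an equality rather than a mere proportionality. With this convention in place the goal becomes showing $B(l,m)(t) = \sum_{i \in \mathbb{V}(l,m)(t)} b_i(t) = x_s(l,m)(t)$.

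Next, I would invoke the operational definition of delay stated in the introduction to Section \ref{sec:d_mp}: the delay experienced by a vehicle is the additional travel time above the time that vehicle would have needed to traverse the same distance at free flow speed. Written per time step, this gives
\begin{equation*}
b_i(t) \;=\; \Delta t \;-\; \frac{e_i(t)}{v_f},
\end{equation*}
where $e_i(t)$ is the distance covered by vehicle $i$ during step $t$. I would then split $\mathbb{V}(l,m)(t) = \mathbb{V}_s(l,m)(t) \cup \mathbb{V}_m(l,m)(t)$ and evaluate $b_i(t)$ on each group separately, using the modeling assumptions laid out just before Equation \eqref{eq:stop_evo}.

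For $i \in \mathbb{V}_m(l,m)(t)$, the assumption that moving vehicles travel at free flow speed gives $e_i(t) = v_f \Delta t$, so $b_i(t) = 0$. For $i \in \mathbb{V}_s(l,m)(t)$, a stopped vehicle has speed zero, hence $e_i(t) = 0$ and $b_i(t) = \Delta t$. Summing,
\begin{equation*}
B(l,m)(t) \;=\; \sum_{i \in \mathbb{V}_m(l,m)(t)} 0 \;+\; \sum_{i \in \mathbb{V}_s(l,m)(t)} \Delta t \;=\; \Delta t \cdot x_s(l,m)(t),
\end{equation*}
and setting $\Delta t = 1$ yields the claimed equivalence.

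The argument is essentially bookkeeping, so I do not expect a substantive obstacle. The only subtle point I would be careful about is the treatment of a vehicle that transitions between the two groups mid-step (the blue vehicles in Figure \ref{fig:traffic_evo}); these are the vehicles captured by $\Psi_1(\boldsymbol{V}(l,m)(t))$. Under the stated store-and-forward abstraction, a vehicle's status within a step is taken to be constant (it either belongs to $\mathbb{V}_m$ for the whole step or to $\mathbb{V}_s$), so the binary split above is consistent, but I would explicitly flag this modeling convention to keep the proof honest. This is also why the equivalence is stated at the level of a single time step rather than continuous time.
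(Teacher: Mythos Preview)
Your proof is correct and essentially identical to the paper's: both split $\mathbb{V}(l,m)(t)$ into stopped and moving vehicles, substitute $b_i(t) = \Delta t - e_i(t)/v_f$, use $e_i(t)=0$ for stopped vehicles and $e_i(t)=v_f\Delta t$ for moving ones, and conclude $B(l,m)(t) = x_s(l,m)(t)\,\Delta t$. The only cosmetic difference is that the paper leaves $\Delta t$ explicit rather than normalizing it to~$1$, and it does not pause to discuss the transitioning vehicles you flagged.
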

\begin{proof}
\begin{equation}
    \begin{split}
        B(l,m)(t)&=\sum_{i\in \mathbb{V}(l,m)(t)}b_i(t)=\sum_{i\in \mathbb{V}_s(l,m)(t)}b_i+\sum_{j\in \mathbb{V}_m(l,m)(t)}b_j\\
        &=\sum_{i\in \mathbb{V}_s(l,m)(t)}(\Delta t-e_i(t)/v_f)+\sum_{j\in \mathbb{V}_m(l,m)(t)}(\Delta t-(v_f\Delta t)/v_f)
    \end{split}
\end{equation}
Since $e_i(t)=0, \quad \forall i \in \mathbb{V}_s(l,m)(t)$,\\
$B(l,m)(t)=x_s(l,m)(t)\Delta t$
\end{proof}

Therefore, total delay in $T$ time steps is equivalent to the sum of the number of stopped vehicles multiplied by the time step of traffic operation. Like the Original-MP, the proposed D-MP allocates green time to maximize the overall pressure at each intersection. The pressure of each phase is defined as the sum of saturation flow multiplied by the weight over all served movements, and the weight is the difference in traffic delay between the upstream link and the average value of its downstream links incurred in the previous time step. In detail, for movement $(l,m)$, the weight is defined as
\begin{equation}\label{eq:weight}
    w(l,m)(t)=\sum_{t'=1}^{T}x_s(l,m)(t-T+t')-\sum_{n\in \mathbb{O}_m}\left[\sum_{t'=1}^{T}x_s(m,n)(t-T+t')\right]H(m,n)(t)
\end{equation}
Next, the pressure of a phase $S_j$ at intersection $i$ is defined as
\begin{equation}\label{eq:pressure}
    p(S_{ij})(t)=\sum_{(l,m)\in\mathbb{S}_{ij}}C(l,m)(t)w(l,m)(t)
\end{equation}

For any intersection $i$, at time $t$, the delay based MP selects the phase with the maximal pressure, i.e.,
\begin{equation}\label{eq:opt}
    S^*=\argmax_{j}p(S_{ij})
\end{equation}

Equations \eqref{eq:weight}-\eqref{eq:opt} are repeated every $T$ steps. Note that for each intersection, only the delay from adjacent links is needed, so this model is decentralized.

\subsection{Stability of the delay-based MP}
The key property of the original MP is its maximum stability, also known as maximization of throughput. In general, this property means if there exists a signal control policy that can accommodate a demand, then the stable control can accommodate the demand. This section will prove that the proposed D-MP is a stable control method.

\begin{definition}
A demand scenario $\{d(i),\quad i\in\mathbb{L}_e\}$ is feasible if there exists a control sequence $\boldsymbol{S}(t)$ that
\begin{equation}
    \bar{S}(l,m)c(l,m)>f_l H(l,m)\quad \forall (l,m)
\end{equation}
\end{definition}
\hl{where $f_l$ is the average volume for each link uniquely determined by the entry demand} $\boldsymbol{d}$ and turning ratios at each intersection, see \citep{varaiya2013max}, and $\bar{S}$ denotes the time ratio that each movement is served by the signal in the long-run ,
\begin{equation*}
    \bar{S}(l,m)=\liminf\limits_{T\rightarrow\infty} \frac{1}{T}\sum_{t=1}^{T}S(t)(l,m)
\end{equation*}

Therefore, a demand scenario is feasible if there exists a signal control sequence from which the average service rate for all movements in the long-run is higher than the average arrival rate.\\
\begin{definition}
    A control sequence $\boldsymbol{S}(t)$ is stable in the mean if the average number of vehicles in the network, $\frac{1}{T}\sum_{t=1}^{T}\sum_{(l,m)}\mathbb{E}[x(l,m)(t)]$, is finite for all $T$.
\end{definition}

It has been shown that stable control sequences exist if and only if the demand is feasible. The proof can be found in \citep{varaiya2013max}. Next, we prove D-MP is stable.

\begin{theorem}\label{keytheo}
The delay based MP \eqref{eq:opt} is stable if $\boldsymbol{d}\in \mathbb{D}_f$.
\end{theorem}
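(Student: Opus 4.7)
The plan is to adapt the classical Lyapunov-drift argument used by Tassiulas and by Varaiya for Q-MP, replacing the queue-length state by the stopped-vehicle state $x_s$. This is natural because Proposition 2 already identifies the D-MP weight, built from delay, with a windowed sum of $x_s$, and because the assumption that the number of moving vehicles $x_m(l,m)(t)$ is uniformly bounded means that controlling $E[x_s]$ automatically controls $E[x]=E[x_s]+E[x_m]$, which is what Definition 3 asks for. Accordingly, I would take the Lyapunov candidate
\begin{equation*}
L(t)=\sum_{(l,m)} x_s(l,m)(t)^{2},
\end{equation*}
and analyze the one-control-step drift $\Delta L(t)=L(t+T)-L(t)$.

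First I would expand the drift using the store-and-forward recursion \eqref{eq:stop_evo}. Writing the increment $x_s(l,m)(t+1)-x_s(l,m)(t)$ as arrivals $\Psi_1$ minus stopped-vehicle service and using the elementary inequality $(a+b)^{2}-a^{2}\le 2ab+b^{2}$, the boundedness of $\Psi_1$, of $C(l,m)$, and of the turning ratios gives a pointwise bound of the form
\begin{equation*}
\Delta L(t)\le K_{1}-2\sum_{(l,m)} \tilde x_s(l,m)(t)\bigl[s(l,m,S(t))-a(l,m)(t)\bigr],
\end{equation*}
where $\tilde x_s(l,m)(t)=\sum_{t'=1}^{T}x_s(l,m)(t-T+t')$ is exactly the quantity appearing in the D-MP weight \eqref{eq:weight}, $s(l,m,S)=C(l,m)S(l,m)$ is the saturation service under the chosen phase, $a(l,m)$ is the rate at which vehicles join the stopped group (bounded through $\Psi_1$ and routing from upstream links), and $K_{1}$ absorbs all second-order bounded terms. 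Combining the $(l,m)$ and downstream $(m,n)$ contributions exactly as in the Q-MP derivation, the relevant cross term reassembles into the D-MP pressure $\sum_{(l,m)}C(l,m)(t)w(l,m)(t)S(l,m)(t)$.

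The second step is to invoke the definition of D-MP. Since \eqref{eq:opt} selects the phase that maximizes the pressure, it also maximizes the negative part of the drift over every admissible (possibly randomized) alternative policy. Because $d\in\mathbb{D}_{f}$, there exists by Definition 3 a feasible schedule with long-run service ratios $\bar S(l,m)c(l,m)>d_{l}H(l,m)+\epsilon$ for some slack $\epsilon>0$. Plugging this stationary randomized schedule into the right-hand side as a lower bound to the D-MP maximum, and then taking conditional expectation, yields the Foster--Lyapunov inequality
\begin{equation*}
E\bigl[\Delta L(t)\mid \mathcal F_{t}\bigr]\le K_{2}-2\epsilon\sum_{(l,m)} x_s(l,m)(t),
\end{equation*}
from which time-averaging and the standard argument (as in Varaiya 2013) give $\limsup_{T\to\infty}\tfrac1T\sum_{t=1}^{T}\sum_{(l,m)}E[x_s(l,m)(t)]<\infty$, and hence the corresponding bound on $E[x(l,m)(t)]$ via the uniform bound on $x_m$.

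The step I expect to cost the most care is the bookkeeping between the windowed weight $\tilde x_s$ used in \eqref{eq:weight} and the instantaneous state $x_s(t)$ that appears naturally when one differences $L(t)$ step by step. Because the one-step change in $x_s(l,m)$ is uniformly bounded (arrivals are capped by $\Psi_1$ and by the finite $x_m$, departures by $C(l,m)$), one has $|\tilde x_s(l,m)(t)-T\,x_s(l,m)(t)|=O(T^{2})$ uniformly in the state, and this discrepancy can be absorbed into the constant $K_{2}$ without affecting the coefficient of $\sum x_s$ on the right-hand side. A secondary subtlety is that $\Psi_1$ depends on the full spatial distribution $\mathbf V(l,m)(t)$ rather than only on counts; however, since only its boundedness (not its particular form) enters the drift bound, this does not obstruct the argument, and the remainder of the proof proceeds exactly along the lines of the queue-based stability proof.
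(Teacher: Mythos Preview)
Your Lyapunov candidate $L(t)=\sum_{(l,m)}x_s(l,m)(t)^2$ is a natural choice, and the bookkeeping between the windowed sum $\tilde x_s$ and the instantaneous $x_s$ that you flag at the end is indeed routine (the paper handles it the same way, via the bounded one-step increments in \eqref{eq:stop_evo}). The real gap is earlier, at the step you call ``combining the $(l,m)$ and downstream $(m,n)$ contributions exactly as in the Q-MP derivation.''

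In the Q-MP drift calculation the downstream term in the weight emerges from a flow-conservation identity: the arrivals to movement $(l,m)$ are precisely $\sum_{k}\min\{C(k,l)S(k,l),x(k,l)\}H(l,m)$, i.e.\ the departures from upstream movements times a routing fraction. Summing $\sum_{(l,m)}x(l,m)\,a(l,m)$ and reindexing turns the arrival contribution into $\sum_{(l,m)}s(l,m)\sum_{n}x(m,n)H(m,n)$, and this is exactly what produces the backpressure weight $x(l,m)-\sum_n x(m,n)H(m,n)$. For your $L$ the arrival term in \eqref{eq:stop_evo} is $\Psi_1(\mathbf V(l,m))$, the number of moving vehicles \emph{inside the same link} that become stopped; it is not the departure rate from any upstream movement, so the reindexing trick does not apply and the downstream component of the D-MP weight $w(l,m)$ never appears in the drift. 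What the drift of $\sum x_s^2$ actually gives you is (up to bounded terms) $-2\sum_{(l,m)}x_s(l,m)\bigl[C(l,m)S(l,m)-\Psi_1(l,m)\bigr]$, and D-MP does not maximize $\sum C(l,m)S(l,m)x_s(l,m)$: it maximizes $\sum C(l,m)S(l,m)\bigl[\tilde x_s(l,m)-\sum_n\tilde x_s(m,n)H(m,n)\bigr]$, and the subtracted downstream part is \emph{not} bounded, so you cannot absorb the discrepancy into $K_1$.

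This is precisely why the paper does \emph{not} take $\sum x_s^2$ as the Lyapunov function. It keeps Varaiya's function $|x(t)|^2$ on the total count $x=x_s+x_m$, so that the reassembly into the Q-MP weight $w'(l,m)=x(l,m)-\sum_n x(m,n)H(m,n)$ goes through verbatim. Only \emph{after} obtaining the term $\sum[d_lH-cS^*]w'$ does it split $w'=w'_s+w'_m$ into stopped and moving parts; the moving part is bounded (your observation that $x_m$ is uniformly bounded), and the stopped part differs from the D-MP weight $w$ only by the windowing, which is handled by the bounded-increment argument you describe. In short, the route through $|x|^2$ is not a stylistic choice but the mechanism that makes the downstream term appear; your proposal would need an additional argument linking $\Psi_1(l,m)$ to upstream service, and none is given.
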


\begin{proof}
The stability of a control policy is established if the following inequality holds under the control policy
\begin{equation}\label{eq:keyequation}
    \mathbb{E}\{|\boldsymbol{X}(t+T)|^2-|\boldsymbol{X}(t)|^2|\bm{X}(t)\}\le k-\tau |\boldsymbol{X}(t)|, t=0,T,2T,...
\end{equation}
where $|\boldsymbol{X}|^2$ is the sum of squares of each element in $\boldsymbol{X}$, i.e., $|\boldsymbol{X}|^2=\sum_{(l,m)}x^2(l,m)$; $k<\infty$ and $\tau>0$ are two constants.\\
The proof can be found in \citep{varaiya2013max}, but it is repeated here to make the proof easier to follow.

Unconditioning the expectation and summing over time $t=0,T,2T,...,nT$ gives
\begin{equation}
    \mathbb{E}|\boldsymbol{X}(nT)|^2-\mathbb{E}|\boldsymbol{X}(0)|^2\le nk-\tau\sum_{m=1}^n\mathbb{E}|\boldsymbol{X}(mT)|
\end{equation}

Then,
\begin{equation}\label{eq:conclusion}
    \tau\frac{1}{n}\sum_{m=1}^n\mathbb{E}|\boldsymbol{X}(mT)|\le k+\frac{1}{n}\mathbb{E}|\boldsymbol{X}(1)|^2
\end{equation}

Equation \eqref{eq:conclusion} indicates the average number of vehicle in the network through any time is upper bounded, so Theorem \ref{keytheo} is proved. Therefore, Equation \eqref{eq:keyequation} is the key for the proof, which is proved by the following steps.\\

First, let $\boldsymbol{\delta}=\boldsymbol{X}(t+T)-\boldsymbol{X}(t)$ denote the change in the number of vehicles between signal changes. Then,
\begin{equation}
    |\boldsymbol{X}(t+T)|^2-|\boldsymbol{X}(t)|^2=2\boldsymbol{X}(t)^T\boldsymbol{\delta}+|\boldsymbol{\delta}|^2=2\alpha+\beta
\end{equation}

Next, we are going to prove both $\alpha$ and $\beta$ are upper bounded. \\
\begin{lemma}\label{lemma:alpha}
$\alpha$ is upper bounded by $-\tau|\boldsymbol{X}(t)|+constant$, where $\tau>0$.
\end{lemma}

\begin{proof}
By substituting Equations \eqref{eq:stop_evo}-\eqref{eq:move_in_evo} into the expression of $\alpha$, it is proved in \citep{varaiya2013max} that
\begin{equation}
    \mathbb{E}\{\alpha|x(t)\}=\sum_{l\in\mathbb{L}, m}[d_lH(l,m)(t)T-\mathbb{E}\{\min(C(l,m)(t)S^*(l,m)(t)T, x(l,m)(t)|x(t)\}]w'(l,m)(t)=\alpha_1T+\alpha_2
\end{equation}
where 
\begin{equation}\label{eq:queueweight}
    w'(l,m)(x(t))=x(l,m)(t)-\sum_{n\in \mathbb{O}_m}x(m,n)(t)H(m,n)(t)
\end{equation}
is the weight from Original-MP, and 
\begin{equation}\label{eq:alpha1}
   \alpha_1=\sum_{l\in\mathbb{L}, m}[d_lH(l,m)-c(l,m)(t)S^*(l,m)(t)]w'(l,m)(t)
\end{equation}
\begin{equation}\label{eq:alpha2}
    \alpha_2=\sum_{l\in\mathbb{L}, m}[c(l,m)(t)T-\mathbb{E}\{\min(C(l,m)(t)T, x(l,m)(t)|x(t)\}]S^*(l,m)(t)w'(l,m)(t)
\end{equation}

\hl{Note although Equations} \eqref{eq:alpha1} and \eqref{eq:alpha2} \hl{share the same form with} \citep{varaiya2013max}, \hl{the relationship between $S^*(l,m)(t)$ and $w'(l,m)$ differs. In} \citep{varaiya2013max}, \hl{$S^*(l,m)(t)$ is the phase according to the number of vehicle-based weight $w'(l,m)$. On the contrary, in our model, $S^*(l,m)(t)$ is the phase according to delay-based weight} \eqref{eq:weight} \hl{rather than $w'(l,m)$, which distinguishes the following proof different from} \citep{varaiya2013max}. Next, we prove $\alpha_2$ is upper bounded.

It is straightforward that the term in the square bracket in Equation \eqref{eq:alpha2} is always positive, $S^*(l,m)(t)$ is a 0-1 function , and according to \eqref{eq:queueweight}, $w'(l,m)(t)\le x(l,m)$
\begin{equation*}
    \alpha_2\le\sum_{l\in\mathbb{L}, m}[c(l,m)(t)T-\mathbb{E}\{\min(C(l,m)(t)T, x(l,m)(t)|x(t)\}]S^*(l,m)(t)x(l,m)(t)
\end{equation*}
In addition,
\begin{equation*}
    c(l,m)(t)T-\mathbb{E}\{\min(C(l,m)(t)T, x(l,m)(t)|x(t)\}:
    \begin{cases}
      =0 & \text{if $x(l,m)(t)\ge C(l,m)(t+1)$}\\
      \le c(l,m)(t)T & \text{otherwise}\\
    \end{cases}       
\end{equation*}

Therefore, $\alpha_2\le c(l,m)\bar{C}(l,m)T$ where $c(l,m)$ and $\bar{C}(l,m)$ are the mean and the upper bound of the saturation flow, respectively. Consequently, $\alpha_2$ is upper bounded by a constant.\\
Next, we will prove $\alpha_1$ is upper bounded.\\
Combining Equations \eqref{eq:agg}, \eqref{eq:queueweight} and \eqref{eq:alpha1} provides
\begin{equation}
\begin{split}
    \alpha_1=&\sum_{l\in\mathbb{L}, m}[d_lH(l,m)-c(l,m)(t)S^*(l,m)(t)]\left[x_m(l,m)(t)-\sum_{n\in \mathbb{O}_m}x_m(m,n)(t)H(m,n)(t)\right]\\
    &+\sum_{l\in\mathbb{L}, m}[d_lH(l,m)-c(l,m)(t)S^*(l,m)(t)]\left[x_s(l,m)(t)-\sum_{n\in \mathbb{O}_m}x_s(m,n)(t)H(m,n)(t)\right]\\
    =&\alpha_{11}+\alpha_{12}
\end{split}
\end{equation}

As mentioned before, the number of moving vehicles at any time \hl{$x_m(l,m)(t)$ is upper bounded by a constant,} and $d_lH(l,m)-c(l,m)(t)S^*(l,m)(t)$ is also upper bounded by a constant, so $\alpha_{11}$ is upper bounded by a constant. Next, we will prove $\alpha_{12}$ is also upper bounded. \hl{First, we introduce $w(l,m)(t)$, defined in Equation} \eqref{eq:weight}, \hl{into $\alpha_{12}$ by}
\begin{equation}\label{eq:alpha12}
    \begin{split}
        \alpha_{12} = &\sum_{l\in\mathbb{L}, m}[d_lH(l,m)-c(l,m)(t)S^*(l,m)(t)]\left[\sum_{t'=0}^{T-1}\left[x_s(l,m)(t-t')-\sum_{n\in \mathbb{O}_m}x_s(m,n)(t-t')H(m,n)(t-t')\right]\right]\\
        & -\sum_{l\in\mathbb{L}, m}[d_lH(l,m)-c(l,m)(t)S^*(l,m)(t)]\left[\sum_{t'=1}^{T-1}\left[x_s(l,m)(t-t')-\sum_{n\in \mathbb{O}_m}x_s(m,n)(t-t')H(m,n)(t-t')\right]\right]\\
        = & \sum_{l\in\mathbb{L}, m}[d_lH(l,m)-c(l,m)(t)S^*(l,m)(t)]w(t)\\
        & -\sum_{l\in\mathbb{L}, m}[d_lH(l,m)-c(l,m)(t)S^*(l,m)(t)]\left[\sum_{t'=1}^{T-1}\left[x_s(l,m)(t-t')-\sum_{n\in \mathbb{O}_m}x_s(m,n)(t-t')H(m,n)(t-t')\right]\right]\\
    \end{split}
\end{equation}

Next, we prove the first term in \eqref{eq:alpha12} is upper bounded. Let
\begin{equation}\label{eq:alpha_prime}
    \alpha'=\sum_{l\in\mathbb{L}, m}[d_lH(l,m)(t)-c(l,m)(t)S^*(l,m)(t)]w(t)
\end{equation} \\
Since $d\in\mathbb{D}_f$, there exist a signal control matrix $S^+\in co(S)$ and $\epsilon>0$ such that $c(l,m)S^+(l,m)>d_lH(l,m)+\epsilon\quad \forall (l,m)$, where $co(S)$ is the convex hull of all possible signal timing. Therefore, we can find $S_1\in co(S)$ such that
\begin{equation}\label{eq:constructofsignal}
    c(l,m)S_1(l,m)(t)=
    \begin{cases}
    d_lH(l,m)(t)+\epsilon, & \text{if $w(l,m)(t)\ge 0$}\\
    0, & \text{otherwise}
    \end{cases}
\end{equation}
Since $S^*$ maximizes the term of $c(l,m)(t)S^*(l,m)(t)w(t)$,
\begin{equation}
    \begin{split}
        \alpha'&\le \sum_{l\in\mathbb{L}, m}[d_lH(l,m)-c(l,m)(t)S_1(l,m)(t)]w(t)\\
        &=\sum_{l\in\mathbb{L}, m}[d_lH(l,m)-d_lH(l,m)(t)-\epsilon]w^+(l,m)(t)+\sum_{l\in\mathbb{L}, m}[d_lH(l,m)]w^-(l,m)(t)\\
        &\le -\epsilon\sum_{l\in\mathbb{L}, m}|w(t)|
    \end{split}
\end{equation}
where $w^+=\max\{0,w\}$ and $w^-=\min\{0,w\}$. According to Equation \eqref{eq:weight}, $w(l,m)(t)$ is a linear function of $\{x_s(l,m)(t-T+1), x_s(l,m)(t-T+2),...,x_s(l,m)(t)\}$, so there exists a constant $\eta_1>0$ such that $\sum_{l\in\mathbb{L}, m}|w(t)|\ge \eta_1\sum_{l\in\mathbb{L}, m}|\bm{X}_s(t)|$ where $\bm{X}_s(t)=[x_s(l,m)(t-T+1), x_s(l,m)(t-T+2),...,x_s(l,m)(t)]$. Similarly, according to \eqref{eq:stop_evo} and the assumption that \hl{$\Psi_1(\boldsymbol{V}(l,m)(t))$ is upper bounded by a constant}, $x_s(t-1)$ can be expressed as the sum of $x_s(t)$ and another term that is upper bounded by a constant. Therefore, there exists a constant $\eta_2>0$ such that $\sum_{l\in\mathbb{L}, m}|\bm{X}_s(t)|\ge\eta_2\sum_{l\in\mathbb{L}, m}x_s(l,m)(t)$. Therefore,
\begin{equation}\label{eq:alpha_prime_ineq}
    \alpha'\le -\epsilon\eta_1\eta_2\sum_{l\in\mathbb{L}, m}x_s(l,m)(t)\le -\epsilon\eta_1\eta_2\sum_{l\in\mathbb{L}, m}x(l,m)(t)
\end{equation}

\hl{Next, based on Equation} \eqref{eq:alpha_prime_ineq}, \hl{we prove that the second term in Equation} \eqref{eq:alpha12} \hl{is also upper bounded}. According to Equation \eqref{eq:stop_evo},
\begin{equation}
\begin{split}
    x_s(l,m)(t-1)&=x_s(l,m)(t)-\Psi_1(\boldsymbol{V}(l,m)(t))+\min\{C(l,m)(t+1)S(l,m)(t+1), x_s(l,m)(t)+\Psi_1(\boldsymbol{V}(l,m)(t))\}\\
    &=x_s(l,m)(t)+z(l,m)(t-1)
\end{split}
\end{equation}
where $z(l,m)(t-1)$ is finite \hl{since we assume $\Psi_1(\boldsymbol{V}(l,m)(t))$ is upper bounded by a constant}. Similarly,
\begin{equation}\label{eq:xs}
    x_s(l,m)(t-t')=x_s(l,m)(t)+z(l,m)(t-t'), t'=1,2,...,T-1
\end{equation}
Then, replacing $x_s(l,m)(t-t')$ and $x_s(m,n)(t-t')$ in Equation \eqref{eq:alpha12} with \eqref{eq:xs} provides
\begin{equation}
    \begin{split}
        \alpha_{12} = &\alpha' -\sum_{l\in\mathbb{L}, m}[d_lH(l,m)-c(l,m)(t)S^*(l,m)(t)]\left[\sum_{t'=1}^{T-1}\left[x_s(l,m)(t-t')\right]-\sum_{t'=1}^{T-1}\left[\sum_{n\in \mathbb{O}_m}x_s(m,n)(t-t')H(m,n)(t-t')\right]\right]\\
        \le & -\epsilon\eta_1\eta_2\sum_{l\in\mathbb{L}, m}x(l,m)(t)-\sum_{l\in\mathbb{L}, m}[d_lH(l,m)(t)-c(l,m)(t)S^*(l,m)(t)]\left[\sum_{t'=1}^{T-1}\left[x_s(l,m)(t)+z(l,m)(t-t')\right]\right.\\
        &\left. -\sum_{t'=1}^{T-1}\left[\sum_{n\in \mathbb{O}_m}x_s(m,n)(t)H(m,n)(t)+z(m,n)(t-t')\right]\right]\\
        =& -\epsilon\eta_1\eta_2\sum_{l\in\mathbb{L}, m}x(l,m)(t)-\sum_{l\in\mathbb{L}, m}[d_lH(l,m)(t)-c(l,m)(t)S^*(l,m)(t)]\left[\sum_{t'=1}^{T-1}\left[x_s(l,m)(t)-\sum_{n\in \mathbb{O}_m}x_s(m,n)(t)H(m,n)(t)\right]\right.\\
        &\left. +\sum_{t'=1}^{T-1}\left[z(l,m)(t-t')-\sum_{n\in \mathbb{O}_m}z(m,n)(t-t')\right]\right]\\
        =&-\epsilon\eta_1\eta_2\sum_{l\in\mathbb{L}, m}x(l,m)(t)-(T-1)\sum_{l\in\mathbb{L}, m}[d_lH(l,m)(t)-c(l,m)(t)S^*(l,m)(t)]\left[x_s(l,m)(t)-\sum_{n\in \mathbb{O}_m}x_s(m,n)(t)H(m,n)(t)\right]\\
        & -\sum_{l\in\mathbb{L}, m}[d_lH(l,m)(t)-c(l,m)(t)S^*(l,m)(t)]\left[\sum_{t'=1}^{T-1}\left[z(l,m)(t-t')-\sum_{n\in \mathbb{O}_m}z(m,n)(t-t')\right]\right]\\
        =& -\epsilon\eta_1\eta_2\sum_{l\in\mathbb{L}, m}x(l,m)(t)-(T-1)\alpha_{12}\\
        & -\sum_{l\in\mathbb{L}, m}[d_lH(l,m)(t)-c(l,m)(t)S^*(l,m)(t)]\left[\sum_{t'=1}^{T-1}\left[z(l,m)(t-t')-\sum_{n\in \mathbb{O}_m}z(m,n)(t-t')\right]\right]\\
    \end{split}
\end{equation}
By moving the second term to the left-hand side, we obtain
\begin{equation}
    \begin{split}
        T\alpha_{12}\le &-\epsilon\eta_1\eta_2\sum_{l\in\mathbb{L}, m}x(l,m)(t)\\
        &-\sum_{l\in\mathbb{L}, m}[d_lH(l,m)(t)-c(l,m)(t)S^*(l,m)(t)]\left[\sum_{t'=1}^{T-1}\left[z(l,m)(t-t')-\sum_{n\in \mathbb{O}_m}z(m,n)(t-t')\right]\right]\\
    \end{split}
\end{equation}
Since the second term on the right-hand side is finite, we obtain,
\begin{equation}
    \alpha_{12}\le -\epsilon\eta_1\eta_2\sum_{l\in\mathbb{L}, m}x(l,m)(t)+constant
\end{equation}
Let $\tau=\epsilon\eta_1\eta_2$, Lemma \ref{lemma:alpha} is proved.
\end{proof}
\begin{lemma}
$\beta$ is upper bounded by a constant.
\end{lemma}
\begin{proof}
Based on the store-and-forward models \eqref{eq:stop_evo}-\eqref{eq:move_in_evo}, it can be easily seen that the difference in the number of two consecutive time steps is upper bounded by a constant. Therefore, $\beta=|\delta|^2$ is upper bounded a constant. The detailed proof for this Lemma can be found in \citep{varaiya2013max}.
\end{proof}
Above all, Equation \eqref{eq:keyequation} is proved. Consequently, Theorem \ref{keytheo} is proved.
\end{proof}

\section{Case study}\label{sc:cs}
In addition to Original-MP and TT-MP, one may question how the Original-MP performs if we replace the number of vehicles (as indicated by `queue' in the Original-MP) with the number of stopped vehicles (the real queue). To make the comparison comprehensive, we include such model in the following, and it is referred to as halting MP (H-MP). Since Original-MP and H-MP use instantaneous measures while the TT-MP and D-MP use average metrics over a time step, they are also collectively referred to as instantaneous-based model and average-based models, respectively. Therefore, this section investigates the control performance of four MP models: Original-MP, TT-MP, H-MP, and D-MP under various demand scenarios in the  microsimulator SUMO \citep{SUMO2018}. Before moving to the network settings, we first clarify how the used metrics are obtained from  SUMO. For the Original-MP, the number of vehicles on a link at a time instant can be retrieved directly from SUMO; for H-MP, the number of halting vehicles, which are defined as the vehicles with speed lower than 0.1 m/s, at a time instant can be retrieved directly from SUMO; for TT-MP, we implement a time step of 1s in SUMO, so the total travel time in the $i$th interval [(i-1)T,iT] is equal to the sum of number of vehicles on the link over all time steps in this interval; for D-MP, we previously assumed (for simplicity) that all moving vehicles are traveling at free flow speed. To be more realistic, we relax this assumption in the simulation, and the total delay in the $i$th time step is defined as:
\begin{equation}
    \bm{B}(l,m)(i) = \bm{tt}(l,m)(i)-\frac{\bm{e}(l,m)(i)}{v_f}
\end{equation}
where the travel distance $\bm{e}(l,m)(i)$ is equal to the sum of average speed multiplied by the number of vehicles over all time steps in the $i$th interval.

We simulate a  4$\times$4 grid network. Each link is assumed to have two lanes, one for the left turning movement and the other for a shared through and right turn movement. For simplicity and without lack of genearlity, the turning ratios are assumed to be the same at all intersections:  0.2 (left), 0.3 (right) and 0.5 (through). The demand is symmetric between the north-south and east-west entry links. The north-south entry links' demand is assumed to be twice that of the east-west entry links. Figure \ref{fig:network} shows the network configuration.  Three scenarios with different degree of saturation were tested. Note that the saturation flow is not an input that can be defined by users in SUMO. Instead, it is determined by the vehicle parameters, lane parameters and car-following models. The max speed for all links is set to 20 m/s; the vehicle length is 5 m; the maximum acceleration is 20 m/s$^2$; the maximum deceleration is 4.5 m/s$^2$, and the default Krauss car-following model \citep{krauss1998microscopic} is used. To calibrate the saturation flow, we simulated a very high demand for a single movement at an isolated intersection and activated its phase for the length of the  simulation. The  hourly throughput under this setting revealed the associated  saturation flow. Using this method, the saturation flows for both left turns and through/right turns are obtained to be 1,800 veh/h/lane. 

\begin{figure}[!htbp]
    \centering
    \includegraphics[width=5.5in]{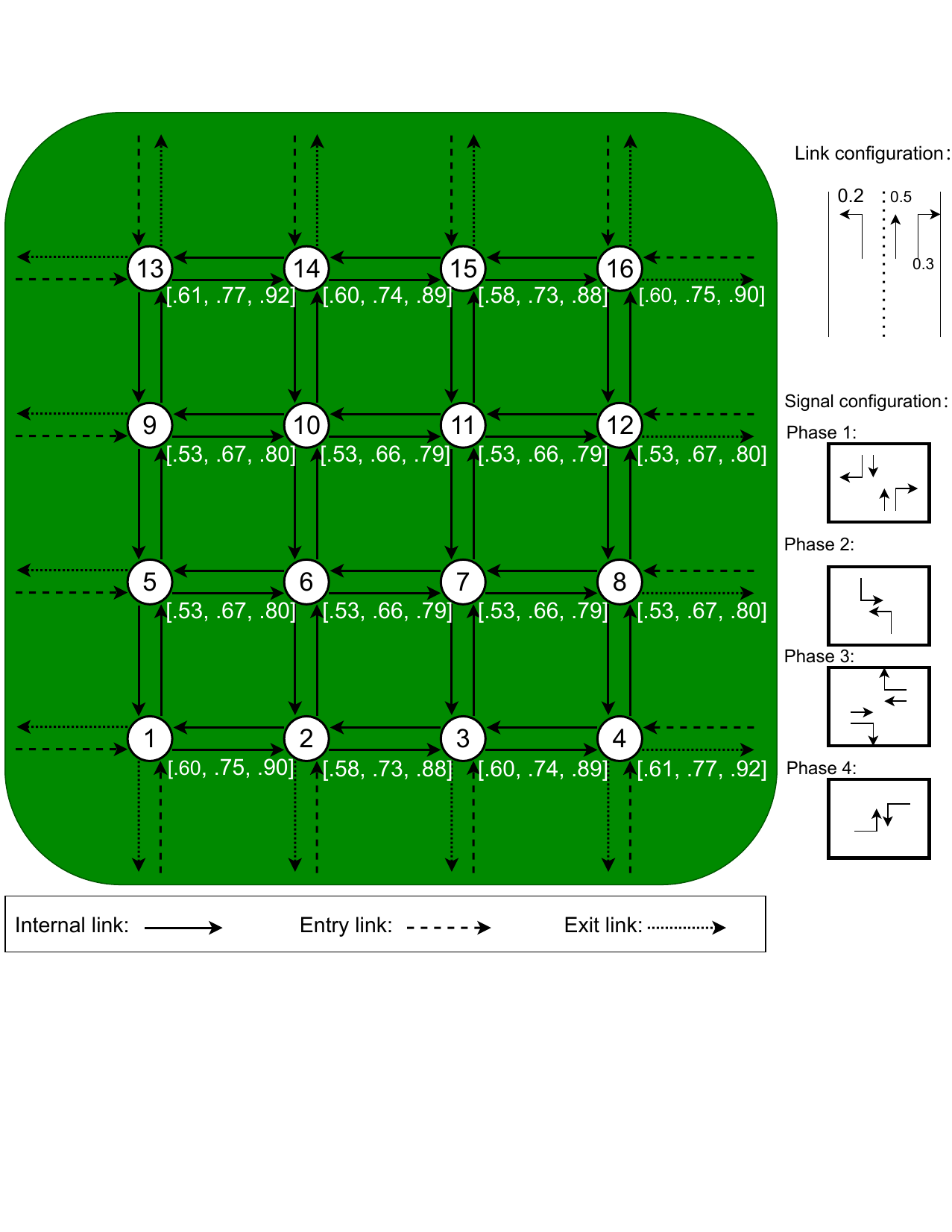}
    \caption{Network configuration.}
    \label{fig:network}
\end{figure}

\subsection{Time step size}\label{sec:timestep}
Although the time step $T$ is equal to the free flow speed in the store-and-forward model, it cannot be proved to be optimal for any metric. A long time step may lose resolution and lead to poor performance. For example, it is known that shorter green time (and green time updates) are desirable when traffic volumes are low. Therefore, the free flow travel time might be too long under low traffic conditions. On the other hand, the saturation flow in Equations \eqref{eq:discharge}-\eqref{eq:move_in_evo} is adjusted by the lost time, assumed to be 3s in this paper. This means when we calculate the pressure for the phase that is different from the previous step, the saturation flow is equal to $C_o\frac{T-3}{T}$, where $C_0$ is the real saturation flow. Therefore, if $T$ is too short, the pressure for these phases can be considerably underestimated and the algorithm barely switches the signal phase, which can lead to poor performance as well. Therefore, before going further, simulation runs were used to obtain the optimal time step for the proposed model under different scenarios. Three time-invariant demands were considered. The demands for north-south entry links were equal to 600 veh/h, 750 veh/h and 900 veh/h in these scenarios. Given the demand at the entry links and the turning ratios, the average flow for internal and exit links can be computed as,
\begin{equation}
    \boldsymbol{f}_{in} = (\boldsymbol{I}-\boldsymbol{H}_{in\rightarrow in})\boldsymbol{H}_{en\rightarrow in}\boldsymbol{d}
\end{equation}
and
\begin{equation}
    \boldsymbol{f}_{ex} = \boldsymbol{H}_{en\rightarrow ex}\boldsymbol{d}+\boldsymbol{H}_{in\rightarrow ex}(\boldsymbol{I}-\boldsymbol{H}_{in\rightarrow in})\boldsymbol{H}_{en\rightarrow in}\boldsymbol{d}
\end{equation}
where $\boldsymbol{f}_{in}$ and $\boldsymbol{f}_{ex}$ represent the average flow vector for interval and exit links, respectively; $\boldsymbol{H}_{in\rightarrow in}$, $\boldsymbol{H}_{en\rightarrow in}$, $\boldsymbol{H}_{en\rightarrow ex}$ and $\boldsymbol{H}_{in\rightarrow ex}$ are the turning matrices from internal links to internal links, from entry links to internal links, from entry links to exit links and from internal links to exit links, respectively. Details can be found in \citep{hao2018model, varaiya2013max1}. After obtaining the average flow rate for each link, the degree of saturation for each intersection was calculated by summing the critical degree of saturation over all four phases. The three numbers in a bracket next to an intersection are the calculated degree of saturation in the low, medium and high demand scenarios, respectively.

For each scenario, ten runs with different random starting seeds were performed. Figure \ref{fig:influence_ts} shows the impact on time step size on three metrics: average delay per vehicle, average queue length per link and average throughput for D-MP. The average delay per vehicle was output by SUMO directly. For queue lengths, the average queue length was found using the actual queue length over all links at each 1s simulation time period and dividing by the simulation time in seconds and the number of links. The average throughput was equal to  the total number  vehicles exiting the network divided by the simulation time. The results in Figure \ref{fig:influence_ts} reveal that for  low demand and medium demand scenarios, 5s is optimal for all metrics. For the high demand scenario, 6s is optimal, but the difference from 5s is relatively small. Therefore, we chose 5s as the time step for the proposed model in the following. The corresponding results for Original-MP, H-MP and TT-MP are shown in Appendix \ref{app:ts}. The optimal time steps for Original-MP, H-MP and TT-MP are 9s, 5s and 9s, respectively. To have a fair comparison, we use the optimal time step for each  models in the following comparison to ensure the best performance of each.

% \begin{figure}[!t]
%      \centering
%      \begin{subfigure}{0.45\textwidth}
%          \centering
%          \includegraphics[width=\textwidth]{}
%          \caption{low low demand}
%          \label{fig:y equals x}
%      \end{subfigure}
%      \begin{subfigure}{0.45\textwidth}
%          \centering
%          \includegraphics[width=\textwidth]{}
%          \caption{low demand}
%          \label{fig:three sin x}
%      \end{subfigure}
%      \hfill
%      \begin{subfigure}{0.45\textwidth}
%          \centering
%          \includegraphics[width=\textwidth]{}
%          \caption{medium demand}
%          \label{fig:five over x}
%      \end{subfigure}
%      \begin{subfigure}{0.45\textwidth}
%          \centering
%          \includegraphics[width=\textwidth]{}
%          \caption{high demand}
%          \label{fig:five over x}
%      \end{subfigure}
%         \caption{Effect of period length}
%         \label{fig:three graphs}
% \end{figure}\\
\begin{figure}[!htbp]
     \centering
     \begin{subfigure}{0.6\textwidth}
         \centering
         \includegraphics[width=\textwidth]{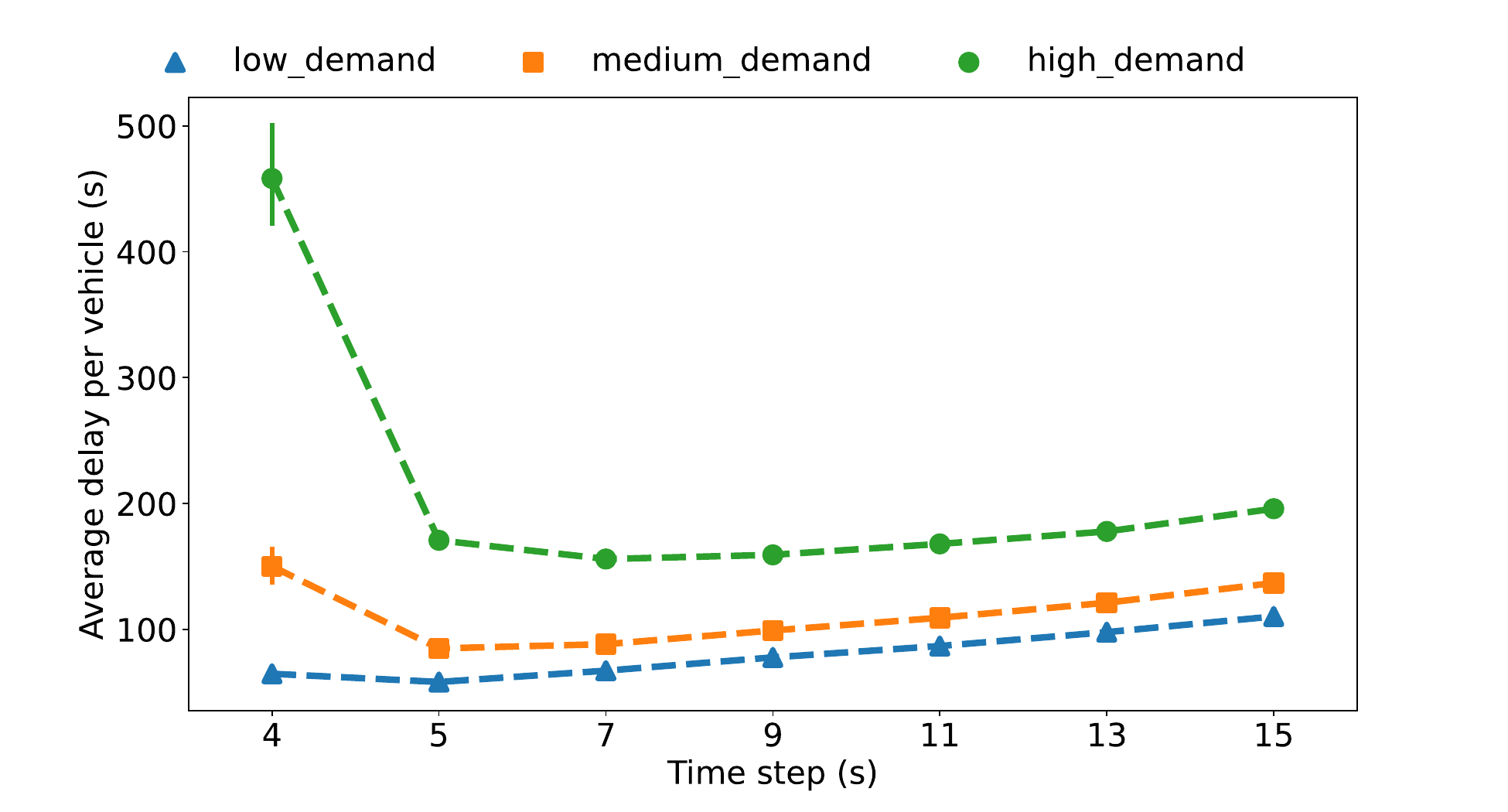}
         \caption{Average delay per vehicle.}
         \label{fig:delay_ts}
     \end{subfigure}
     \begin{subfigure}{0.6\textwidth}
         \centering
         \includegraphics[width=\textwidth]{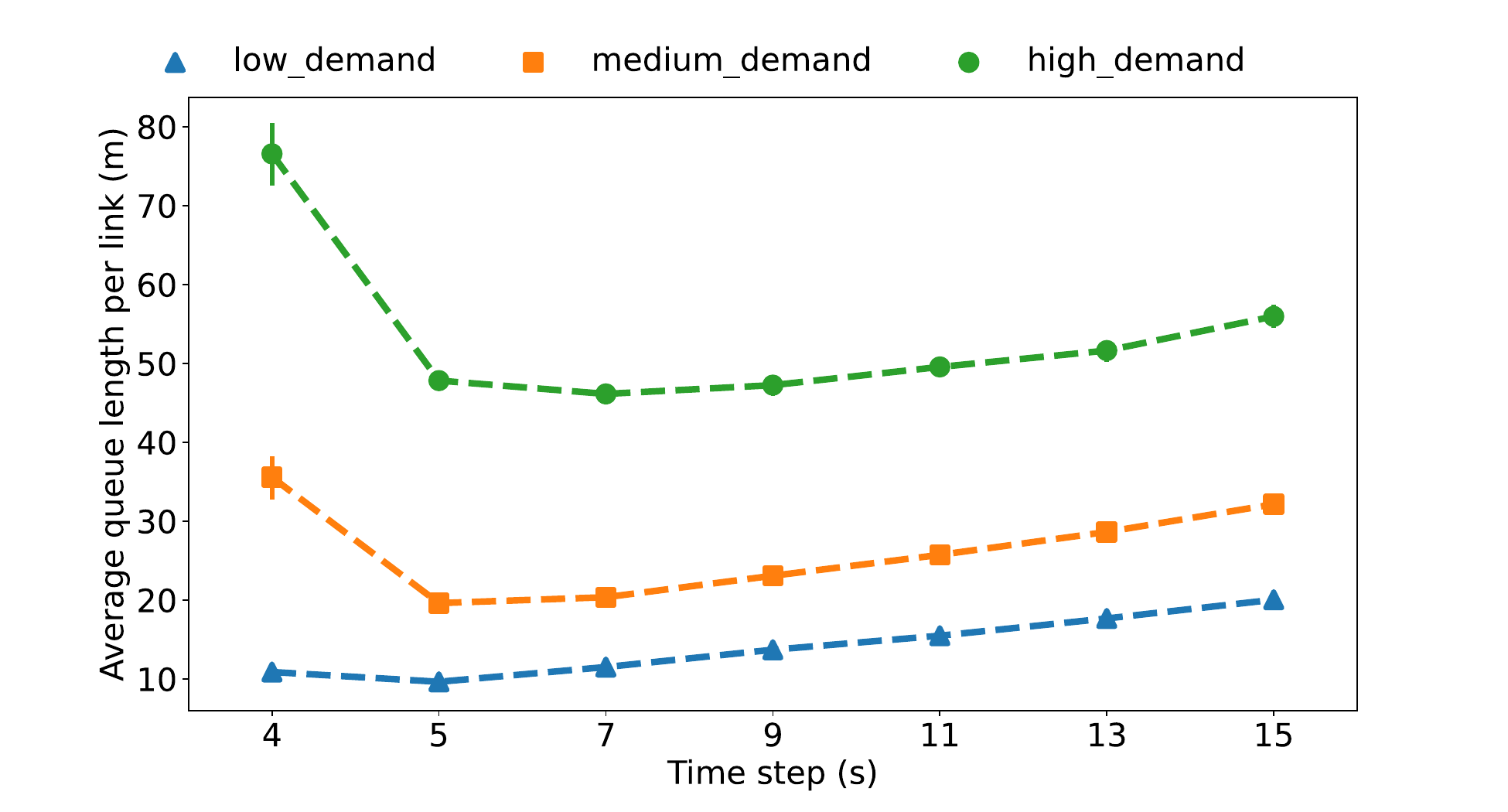}
         \caption{Average queue length}
         \label{fig:queue_ts}
     \end{subfigure}
     \hfill
     \begin{subfigure}{0.6\textwidth}
         \centering
         \includegraphics[width=\textwidth]{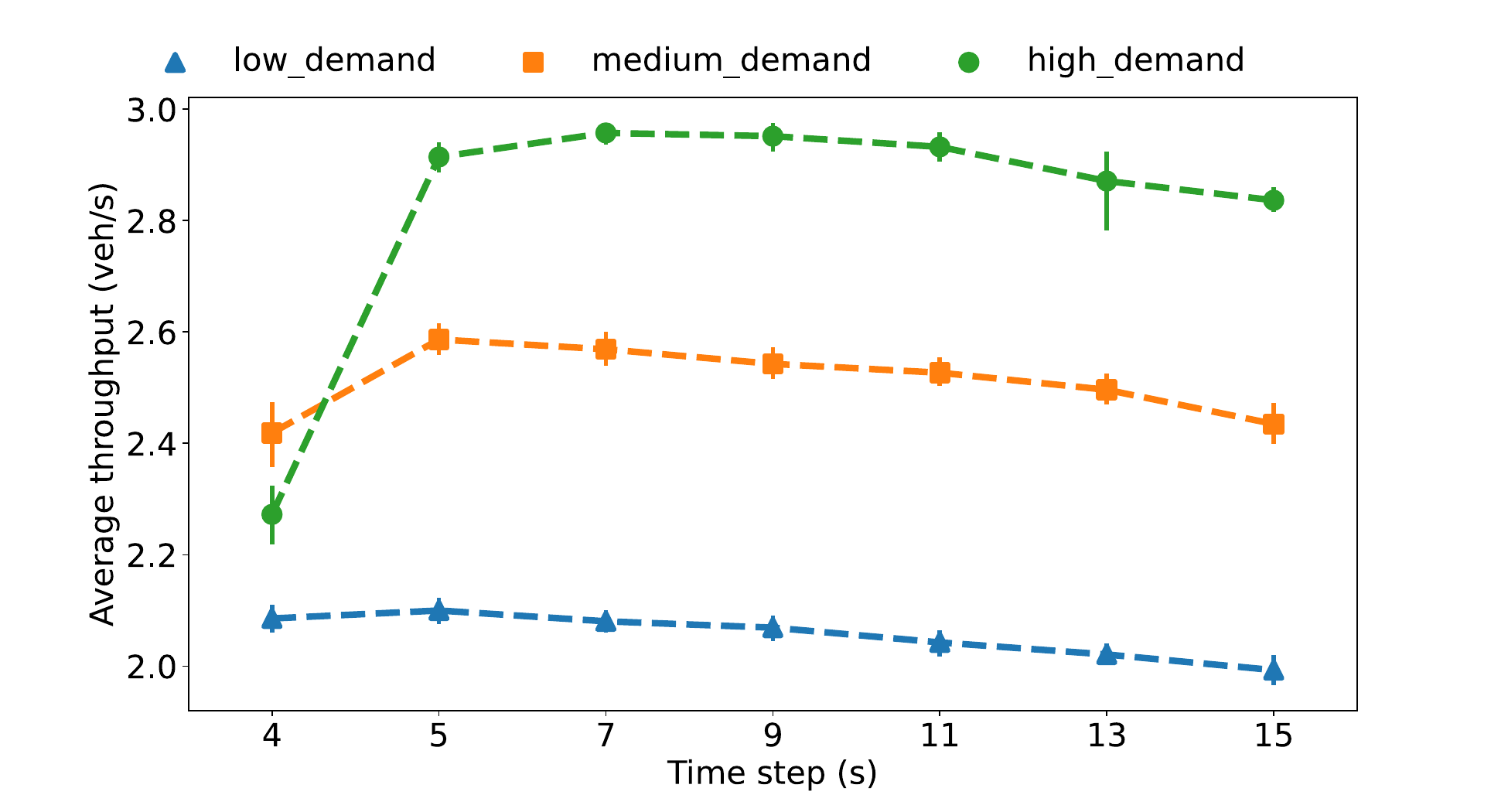}
         \caption{Average throughput}
         \label{fig:through_ts}
     \end{subfigure}
     \caption{Effect of time step on control performance.}
     \label{fig:influence_ts}
\end{figure}

\subsection{Control comparison on time varying traffic conditions}
Using the time steps obtained from the previous section, we implemented the four models on the same network under a more realistic traffic pattern. The total simulation time is 4 hours. Travel demand was assumed to be low (equal to the low demand in Section \ref{sec:timestep}) in the first half hour and then increased continuously to reach the high demand (equal to the high demand in Section \ref{sec:timestep}) during the course of one hour. Then, the demand is held constant for one hour and decreases back to the low level of the course of an hour. The demand for north-south entry links was kept to be  twice that of  east-west entry links. Figure \ref{fig:demand} shows the demand pattern for both types of entry links. 10 runs with different random seeds were performed and the shaded area indicates the variability (using the standard deviation) in the simulation runs. 

\begin{figure}[!htbp]
    \centering
    \includegraphics[width=3.5in]{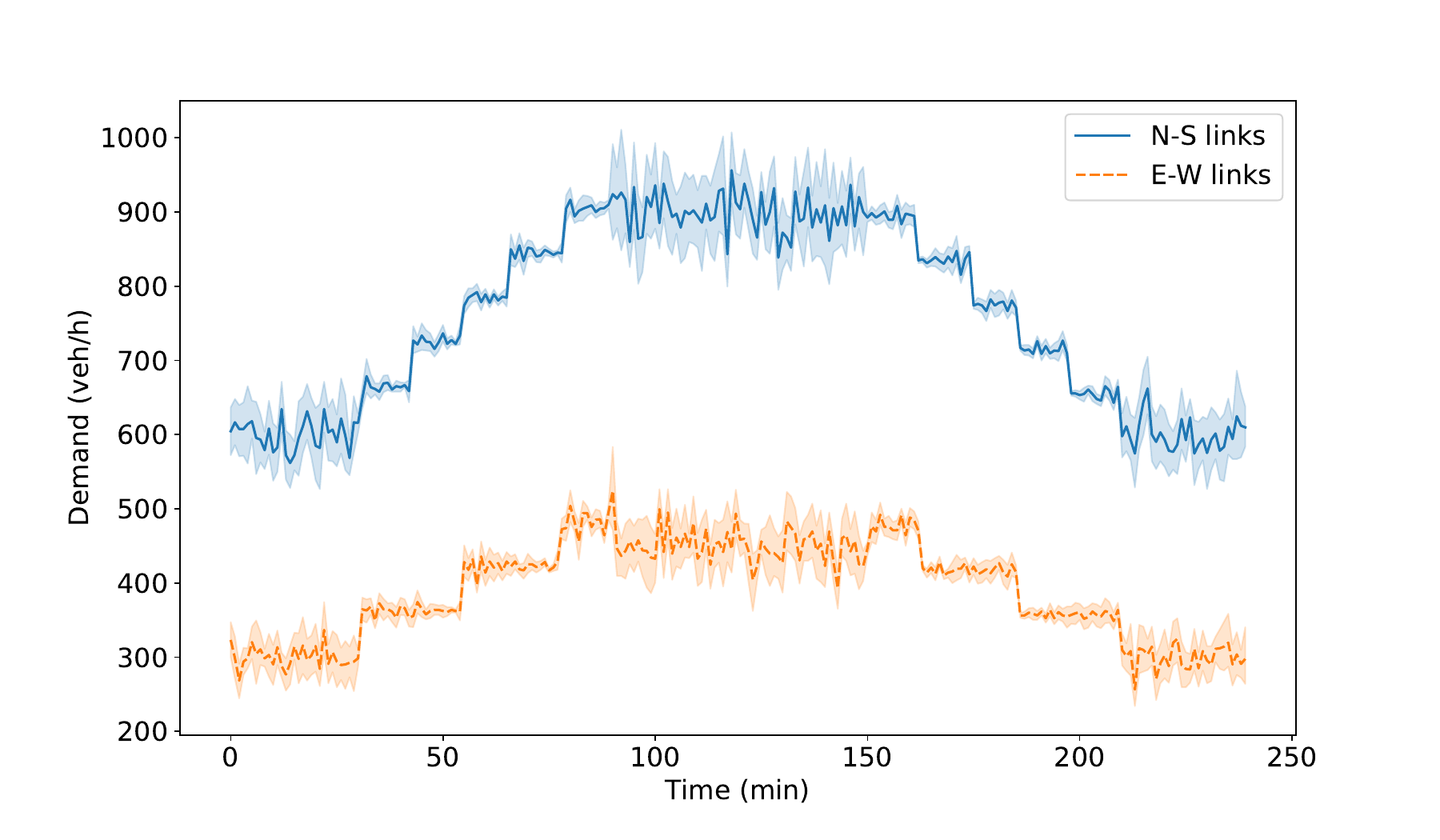}
    \caption{Demand.}
    \label{fig:demand}
\end{figure}

\subsubsection{Internal delay}
Figure \ref{fig:comparison_delay} shows the comparison of internal delay produced by the  four models, which is defined as the average delay incurred by the vehicles already in the network. First of all, D-MP has the best performance in terms of total delay, as shown by Figure \ref{fig:delay_cumulative}. Quantitatively, compared to Original-MP, TT-MP and H-MP, D-MP reduces delay by 19.12\%/330.87 hours, 11.54\%/182.54 hours and 3.95\%/57.49 hours, respectively. Figure \ref{fig:delay_min} shows that both instantaneous-based models perform slightly better than the average-based models when the traffic condition is low (before 50 minutes). As traffic demand increases, the average-based models outperform the instantaneous-based models. After demand returns the low level (after 210 minutes), the average-based models mitigate delay significantly faster than the instantaneous-based models. Overall, Figure \ref{fig:delay_min} demonstrates that the number of stopped vehicles is a better choice in terms of reducing traffic delay than the number of vehicles, and the average metric is beneficial for heavy traffic conditions. Figure \ref{fig:internal_veh} shows the number of vehicles in the network, which has a similar pattern with the delay shown in Figure \ref{fig:delay_min}. This plot proves the average density in the network from D-MP is the lowest for majority of the simulation period; this results in higher average speeds and lower delays.

\begin{figure}[!htbp]
     \centering
     \begin{subfigure}{0.45\textwidth}
         \centering
         \includegraphics[width=\textwidth]{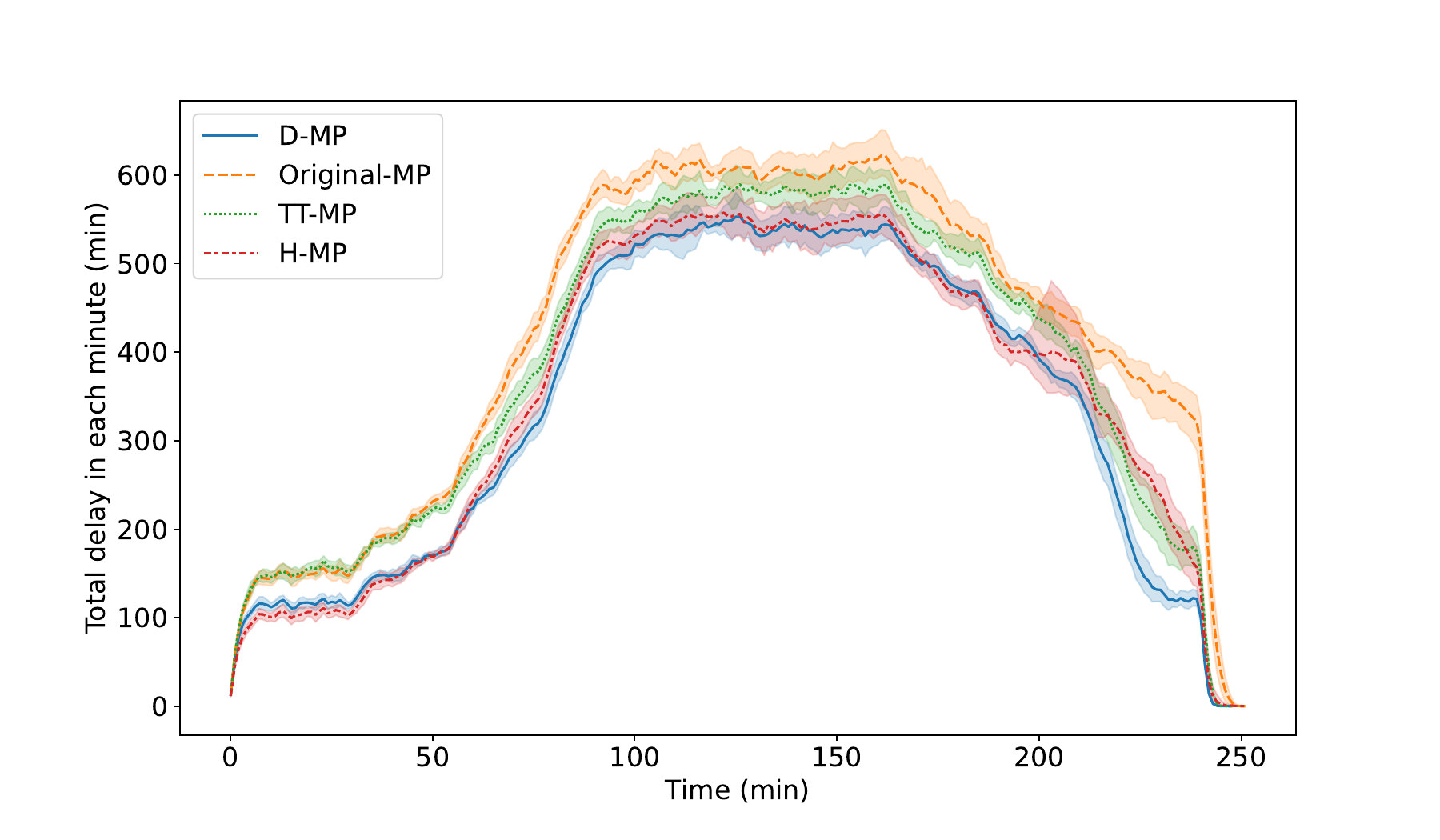}
         \caption{Delay incurred in each minute.}
         \label{fig:delay_min}
     \end{subfigure}
     \begin{subfigure}{0.45\textwidth}
         \centering
         \includegraphics[width=\textwidth]{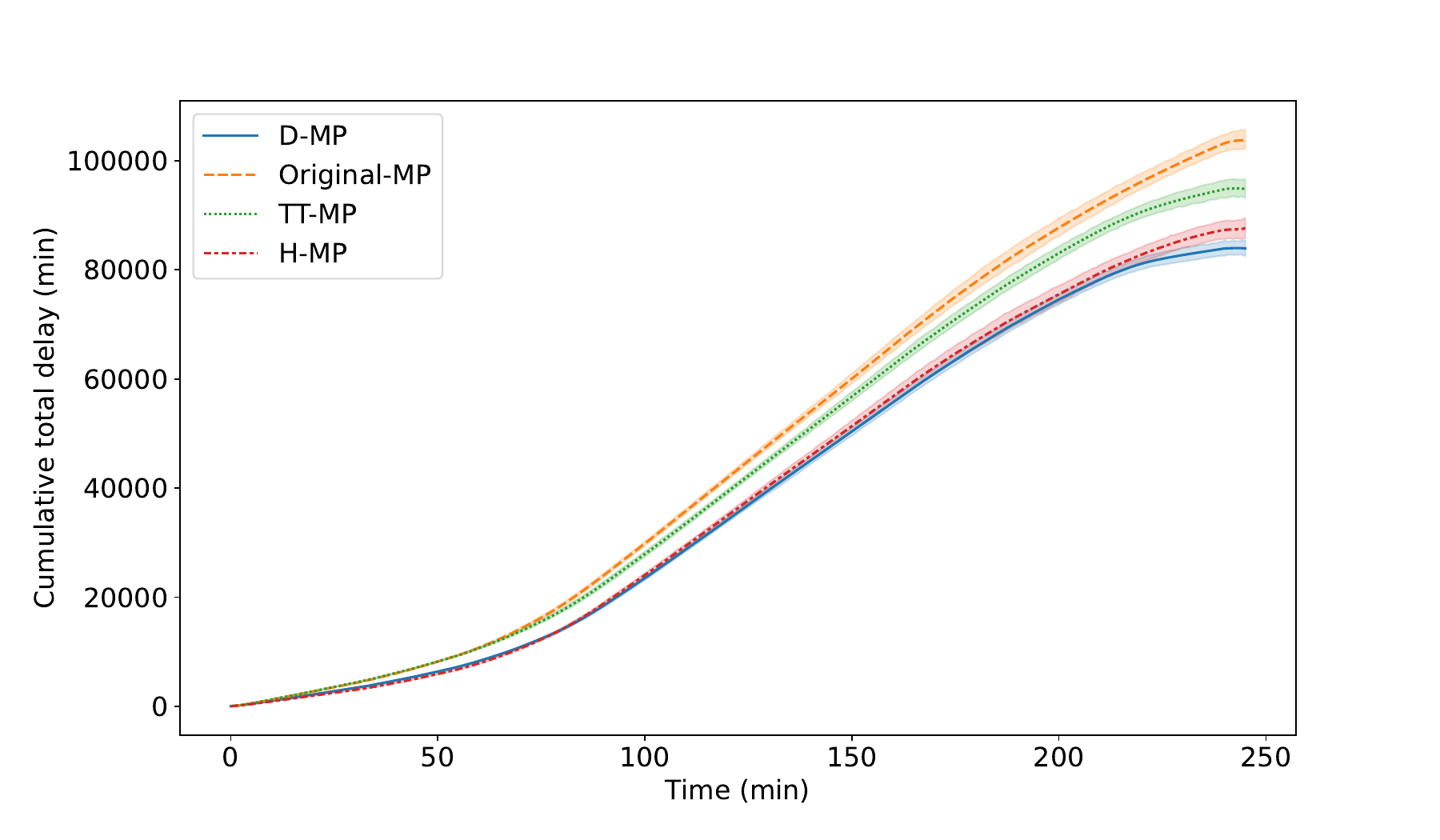}
         \caption{Cumulative delay}
         \label{fig:delay_cumulative}
     \end{subfigure}
     \caption{Comparison of internal delay.}
     \label{fig:comparison_delay}
\end{figure}
\begin{figure}[!htbp]
    \centering
    \includegraphics[width=3.5in]{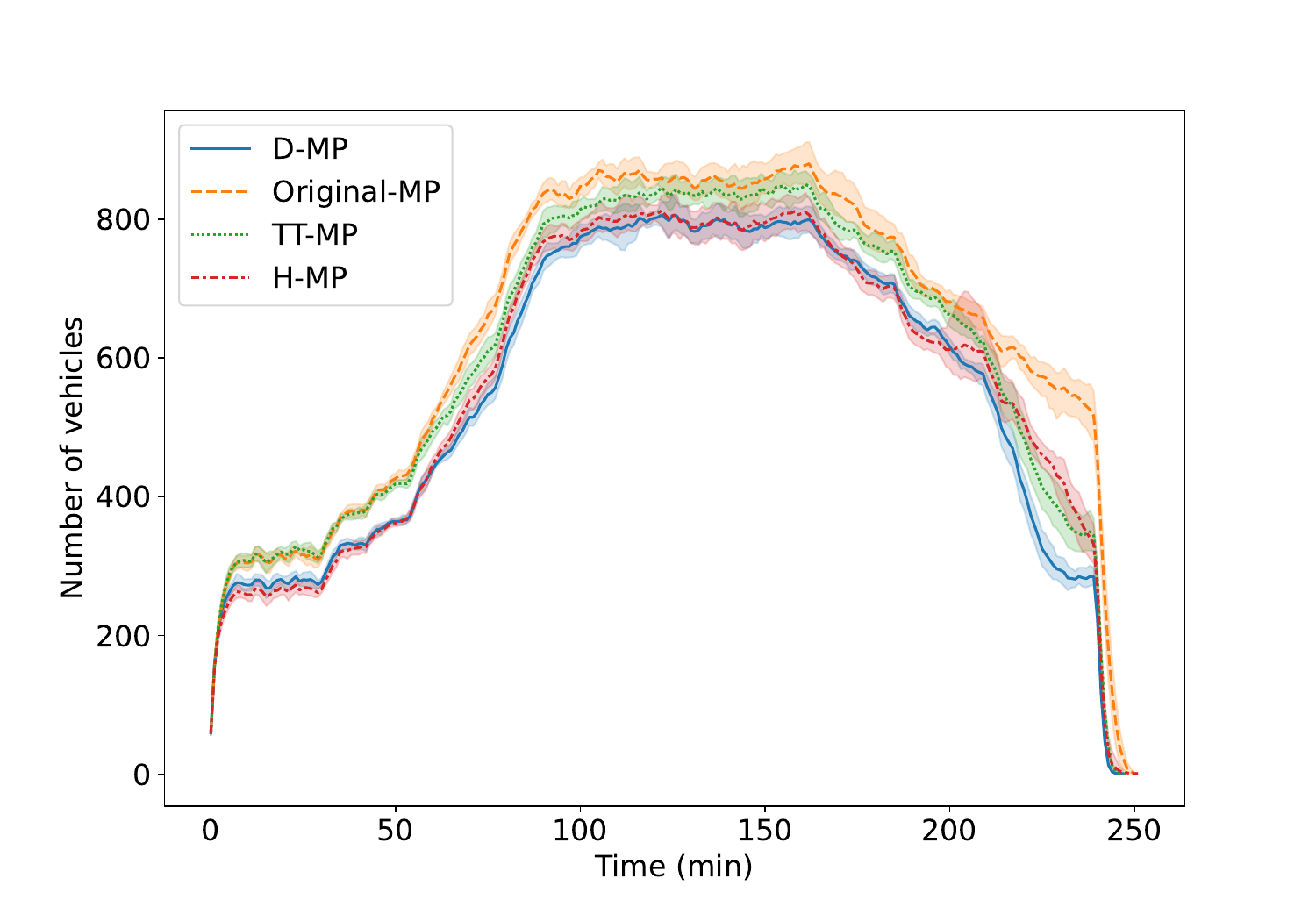}
    \caption{Number of vehicles in the network}
    \label{fig:internal_veh}
\end{figure}

\subsubsection{Blocked vehicles}
Figure \ref{fig:comparison_delay} shows only the delay incurred by the vehicles already in the network, but it does not take into consideration  vehicles blocked at the entry nodes. If a vehicle cannot be inserted at the predefined depart time due to queue spillover, it has to wait until there is enough space on the entry link. Does the lower density from D-MP imply fewer vehicles are able to enter the network? Figure \ref{fig:n_blocked} shows the comparison of the number of blocked vehicles. As mentioned earlier, we defined the stable region as the set of demand that can be accommodate by a model. Before $t=70$ min, demand is low and appears to be stable for all models. As demand increases, Original-MP is the first model that starts blocking vehicles from entering the network, which indicates it has the smallest stable region under this specific demand pattern. It also has the highest number of blocked vehicles during the whole simulation. On the contrary,  D-MP has the largest stable region and least number of blocked vehicles. The evolution of average waiting time, which is defined as the mean time all vehicles up to now (not just the blocked vehicles) have to wait for being inserted to the simulation, is shown in Figure \ref{fig:waiting}. Similarly, D-MP has the lowest waiting time while Original-MP has the highest.

\begin{figure}[!htbp]
     \centering
     \begin{subfigure}{0.45\textwidth}
         \centering
         \includegraphics[width=\textwidth]{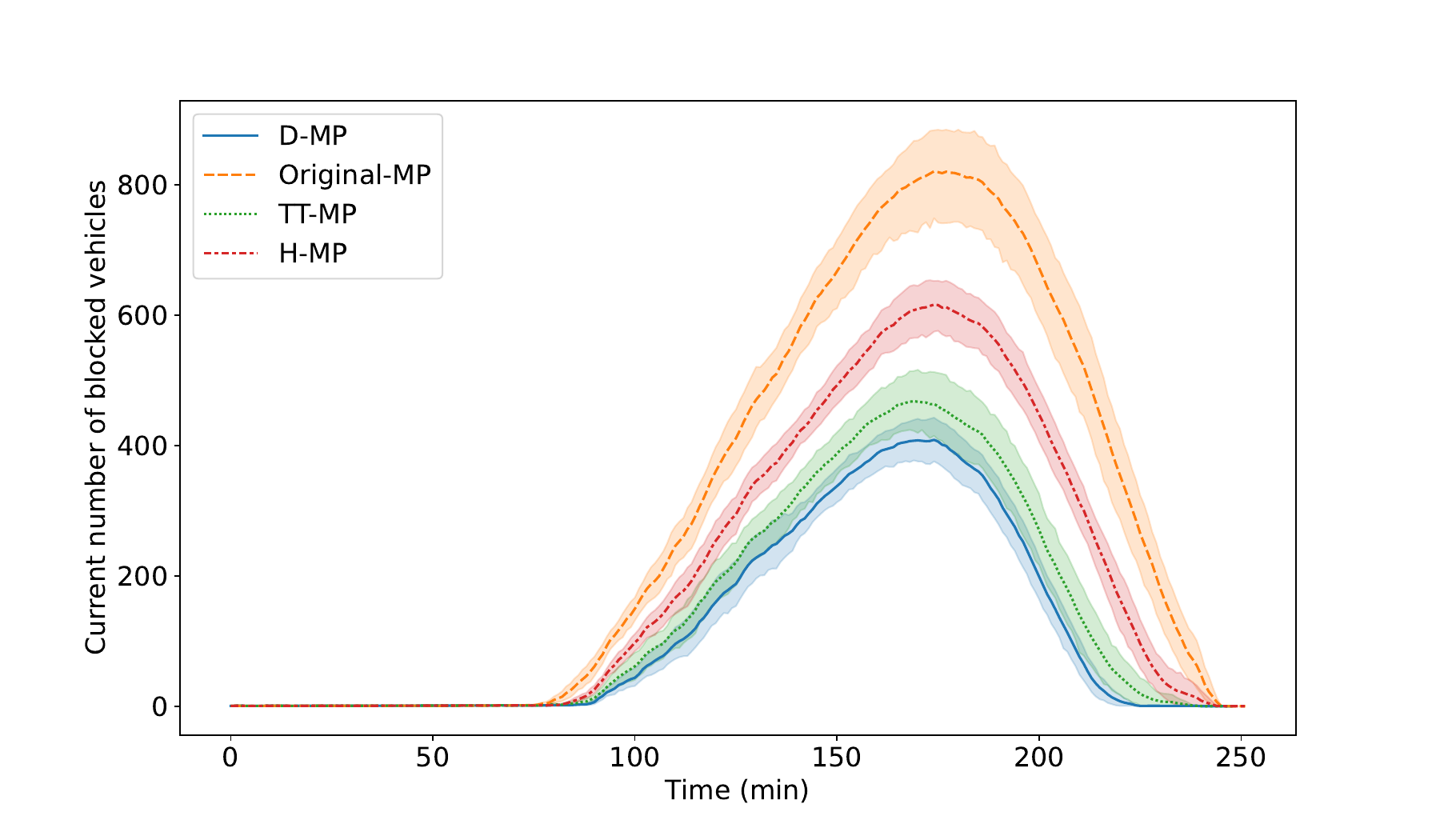}
         \caption{Number of blocked vehicles.}
         \label{fig:n_blocked}
     \end{subfigure}
     \begin{subfigure}{0.45\textwidth}
         \centering
         \includegraphics[width=\textwidth]{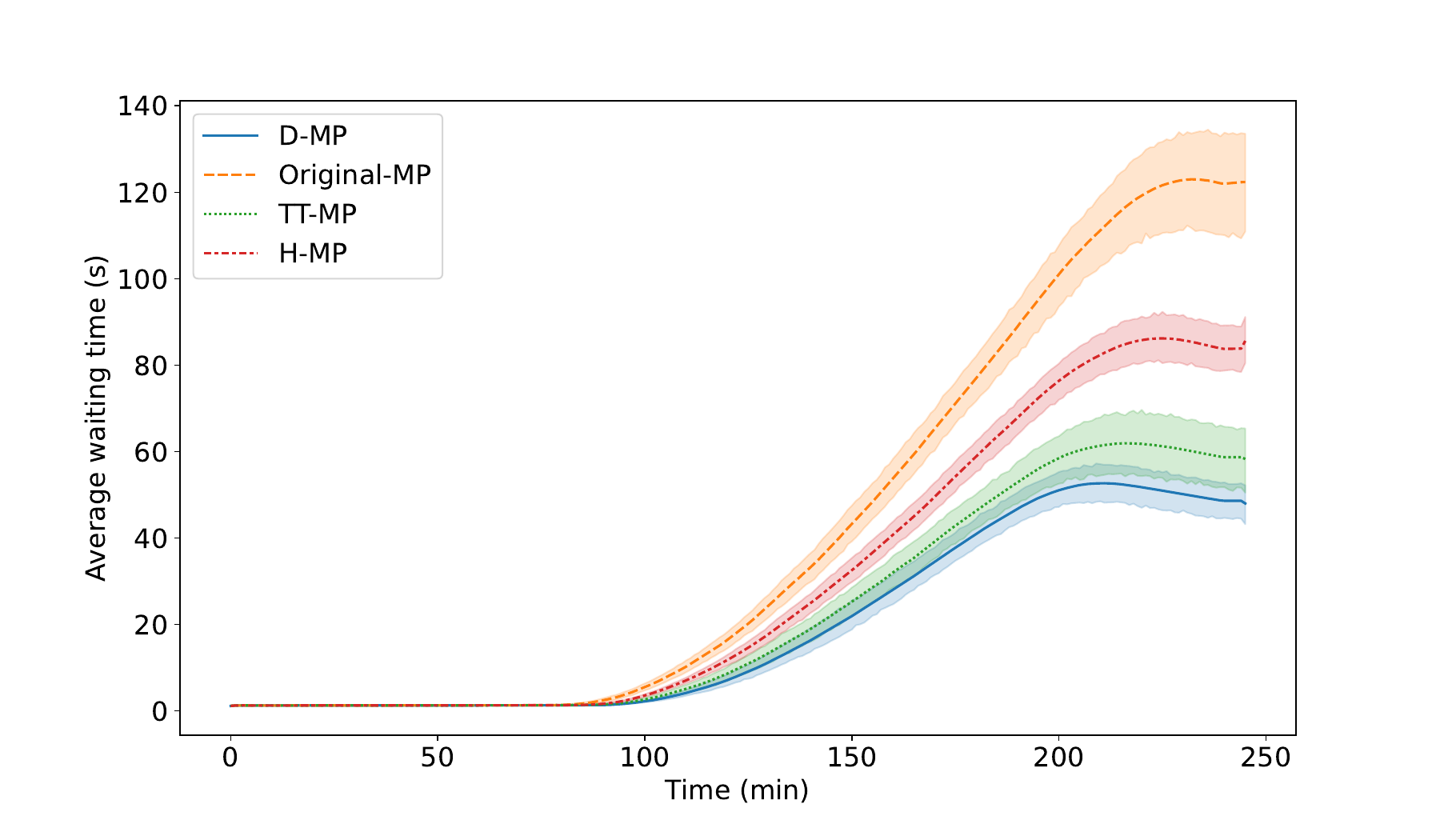}
         \caption{Waiting time.}
         \label{fig:waiting}
     \end{subfigure}
     \caption{Comparison of blocked vehicles.}
     \label{fig:comparison_blocked}
\end{figure}

Unlike the internal delay shown in Figure \ref{fig:comparison_delay}, TT-MP performs better than H-MP in this metric, i.e., TT-MP blocks fewer vehicles than H-MP. Also, the percent difference in blocked vehicles between D-MP and H-MP is much more significant than the internal delay. The reason is that it is more reasonable to have relatively long green time and reduce phase switches when demand is high so that the queues served by the activated phase can be cleared before the signal changes to the next phase. However, the instantaneous-based metric of the  H-MP tends to switch the signal more frequently than what is optimal. In SUMO, a halting vehicle is defined as a vehicle with speed lower than 0.1 m/s. After the phase serving the entry link is activated, all vehicles start accelerating and moving forward. In a very short time, these vehicles are not detected as halting vehicles as long as they speed up to 0.1 m/s, although their speed can still be low (and are practically stopped). In this case, H-MP changes phase too frequently so that more vehicles can be blocked. On the contrary, the average-based algorithm, D-MP, can overcome this drawback since it uses delay, which increases whenever the speed is lower than the speed limit. Therefore, if the activated phase has a low average speed and large number of vehicles at the end of the current step, it is likely to be activated for the next step as well in D-MP. Then, we calculated the average total delay, which is equal to the sum of average internal delay and average waiting time, for all models, as shown in Table \ref{tab:delay_compare}. Overall, the average-based models have lower average delay than the instantaneous-based models. Compared to the benchmark models, the proposed D-MP can reduce delay by 13.11\%-36.44\%.\\
% Table generated by Excel2LaTeX from sheet 'delay'
\begin{table}[htbp]
  \centering
  \caption{Delay comparison (s)}
    \begin{tabular}{ccccc}
    \toprule
    Model & Internal & Waiting & Total & Total reduction (\%) \\
    \midrule
    D-MP  & 136.73 & 48.11 & 184.84 & - \\
    H-MP  & 142.32 & 83.33 & 225.65 & 18.08 \\
    Original-MP  & 168.96 & 121.88 & 290.84 & 36.44 \\
    TT-MP & 154.52 & 58.21 & 212.73 & 13.11 \\
    \bottomrule
    \end{tabular}%
  \label{tab:delay_compare}%
\end{table}%

\subsubsection{Inflow and throughput}
Figure \ref{fig:comparison_vehicle} shows the difference in the cumulative number of entered, exit vehicles through all enter and exit nodes between each model and the D-MP, which generally has the highest values. The vertical axis is the cumulative number of vehicles from a given model subtracted by the cumulative value obtained using the D-MP, so negative values indicate the cumulative inflows/outflows for the corresponding model are lower than D-MP. The reason why we show the difference instead of the cumulative number is that compared to the absolute cumulative number, the difference is very small, so it is hard to observe the difference in cumulative plots.  The results reveal  shows that both the entry flow and exit flow from D-MP are higher than all benchmark models, except for the H-MP under very low demand. Under this situation, H-MP has a slightly higher throughput than D-MP, but it becomes lower before the demand becomes unstable (around 80 mins), as shown in Figure \ref{fig:n_blocked}.

\begin{figure}[!htbp]
     \centering
     \begin{subfigure}{0.45\textwidth}
         \centering
         \includegraphics[width=\textwidth]{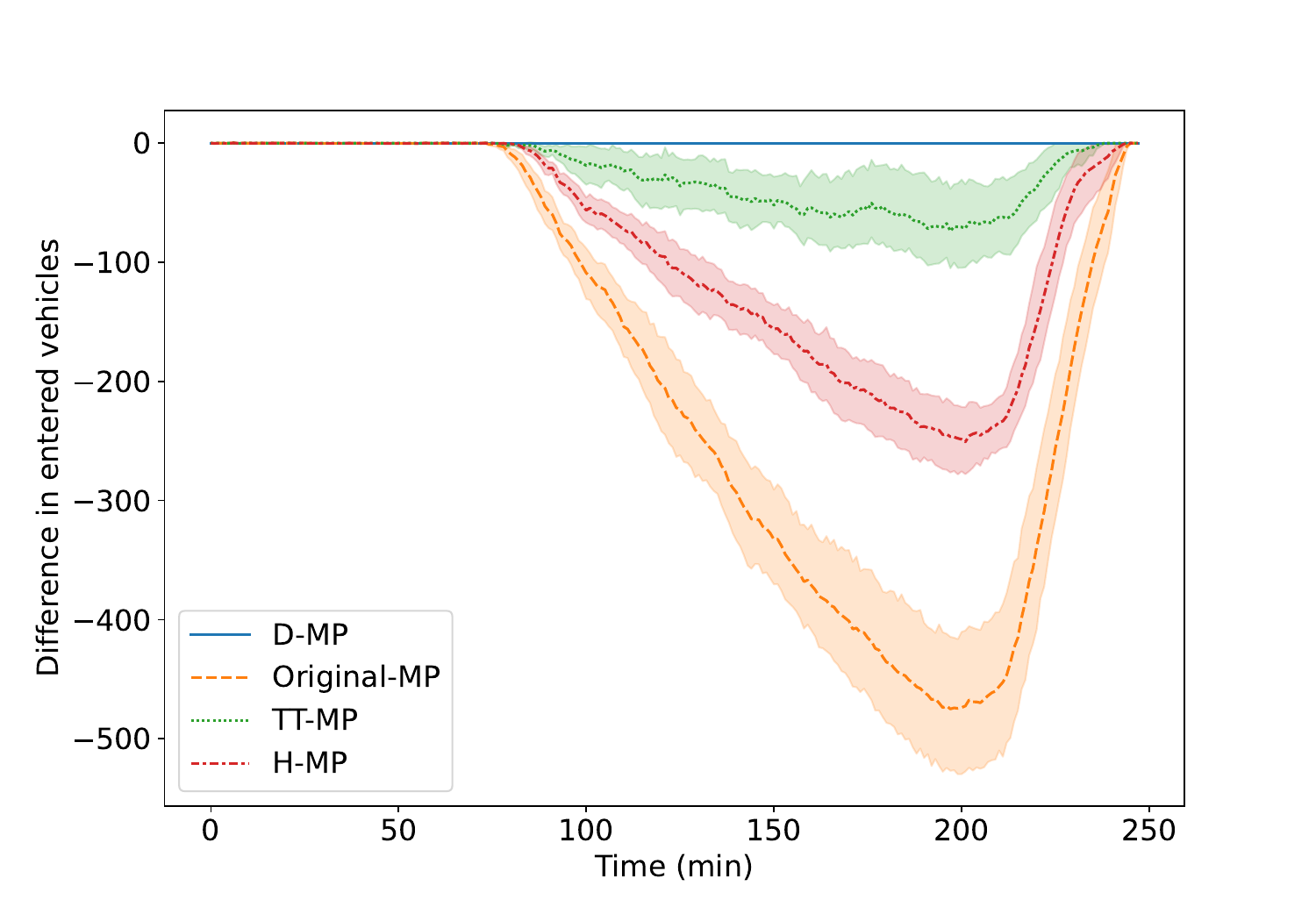}
         \caption{Entered vehicles.}
         \label{fig:diff_enter}
     \end{subfigure}
     \begin{subfigure}{0.45\textwidth}
         \centering
         \includegraphics[width=\textwidth]{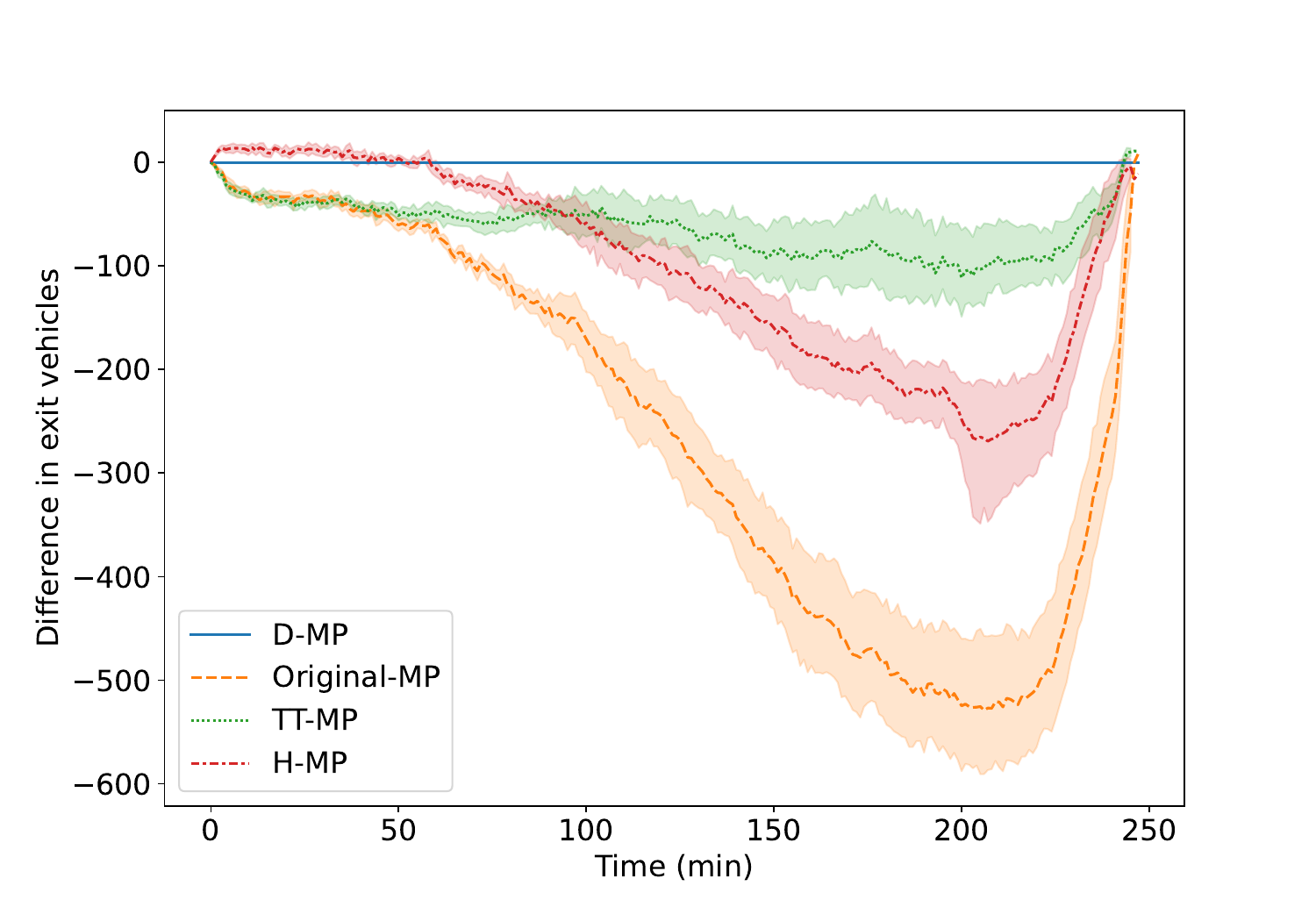}
         \caption{Exit vehicles.}
         \label{fig:diff_exit}
     \end{subfigure}
     \caption{Comparison of blocked vehicles.}
     \label{fig:comparison_vehicle}
\end{figure}

Overall, this section shows that under a realistic and typical traffic pattern, e.g., off-peak hours->peak hours->off-peak hours, the proposed D-MP outperforms all three benchmark models in the following ways. First, D-MP has the lowest density, internal delay and waiting time. Second, D-MP has the largest stable region, which implies when demand increases, D-MP accommodates more vehicles before there are vehicles blocked at the boundary than other models. Third, D-MP has the highest average inflow and throughput rate during the whole period. 

\subsection{Performance in a connected vehicle environment}
The results above are obtained based on full knowledge of the required metrics. However, in reality, the availability or accuracy of these metrics is subject to the availability of corresponding devices or estimation models. Normally, it is expensive or even impossible to equip all intersections with the measurement devices, which is a bottleneck for the implementation of these algorithms in the real world. Thanks to the development and increasing popularity of connected vehicles (CVs), we can expect that more types and more accurate real-time data can be obtained through the communication with CVs directly without the help of other equipment in the future. Therefore, we investigated the performance of the proposed model in a CV environment. Here, we assume a portion of the vehicles in the network are connected, that these CVs are randomly distributed in the network, and only the information from CVs can be obtained. Then, we used the estimation based on the CVs data to control the signal according to the models. We use the same network and same demand scenario in Section \ref{sec:timestep} and tested the performance of the models under various CV penetration rates.

\begin{figure}[!htbp]
     \centering
     \begin{subfigure}{0.45\textwidth}
         \centering
         \includegraphics[width=\textwidth]{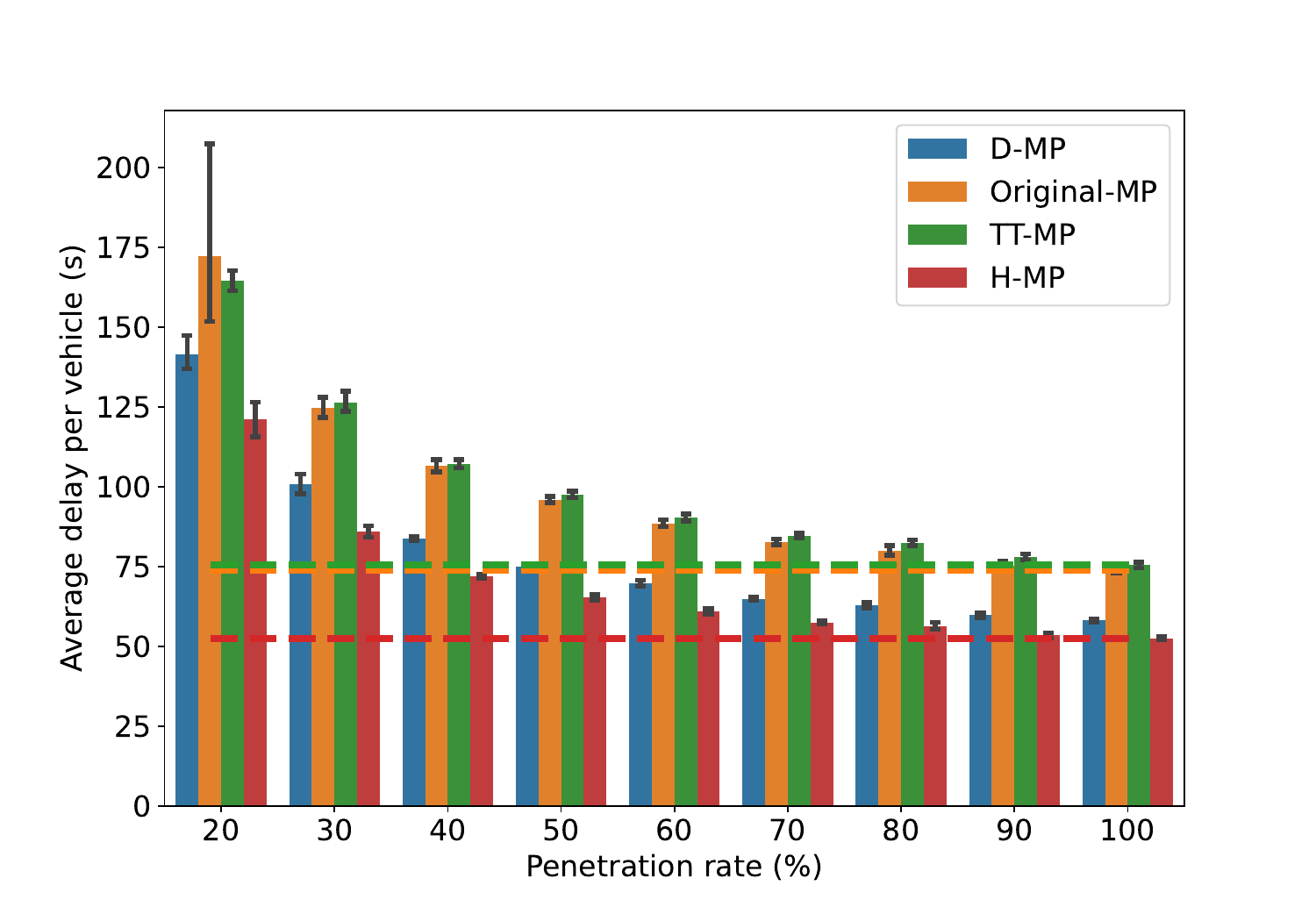}
         \caption{Low demand.}
         \label{fig:pr_low}
     \end{subfigure}
     \begin{subfigure}{0.45\textwidth}
         \centering
         \includegraphics[width=\textwidth]{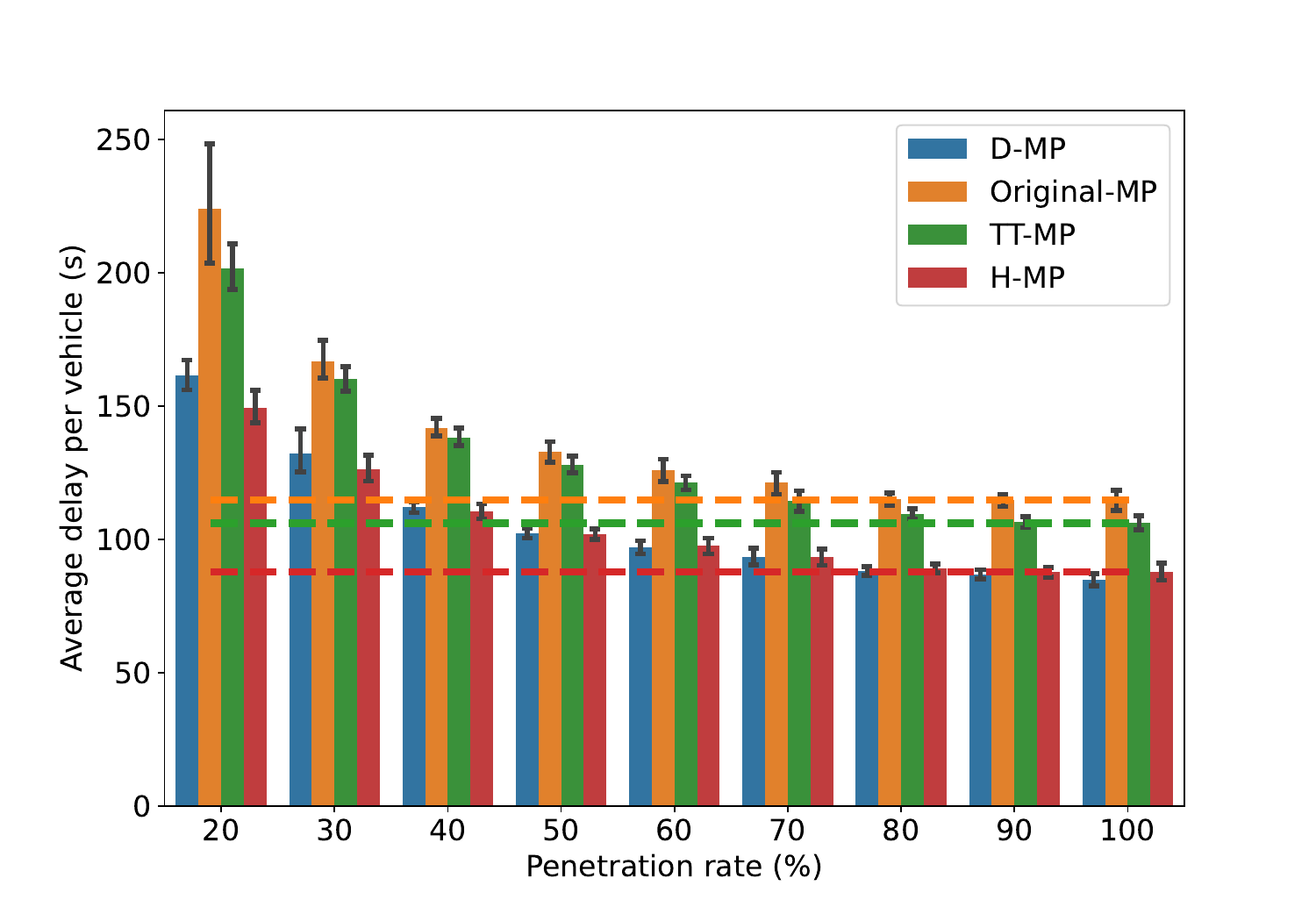}
         \caption{Medium demand.}
         \label{fig:pr_medium}
     \end{subfigure}
     \hfill
     \begin{subfigure}{0.45\textwidth}
         \centering
         \includegraphics[width=\textwidth]{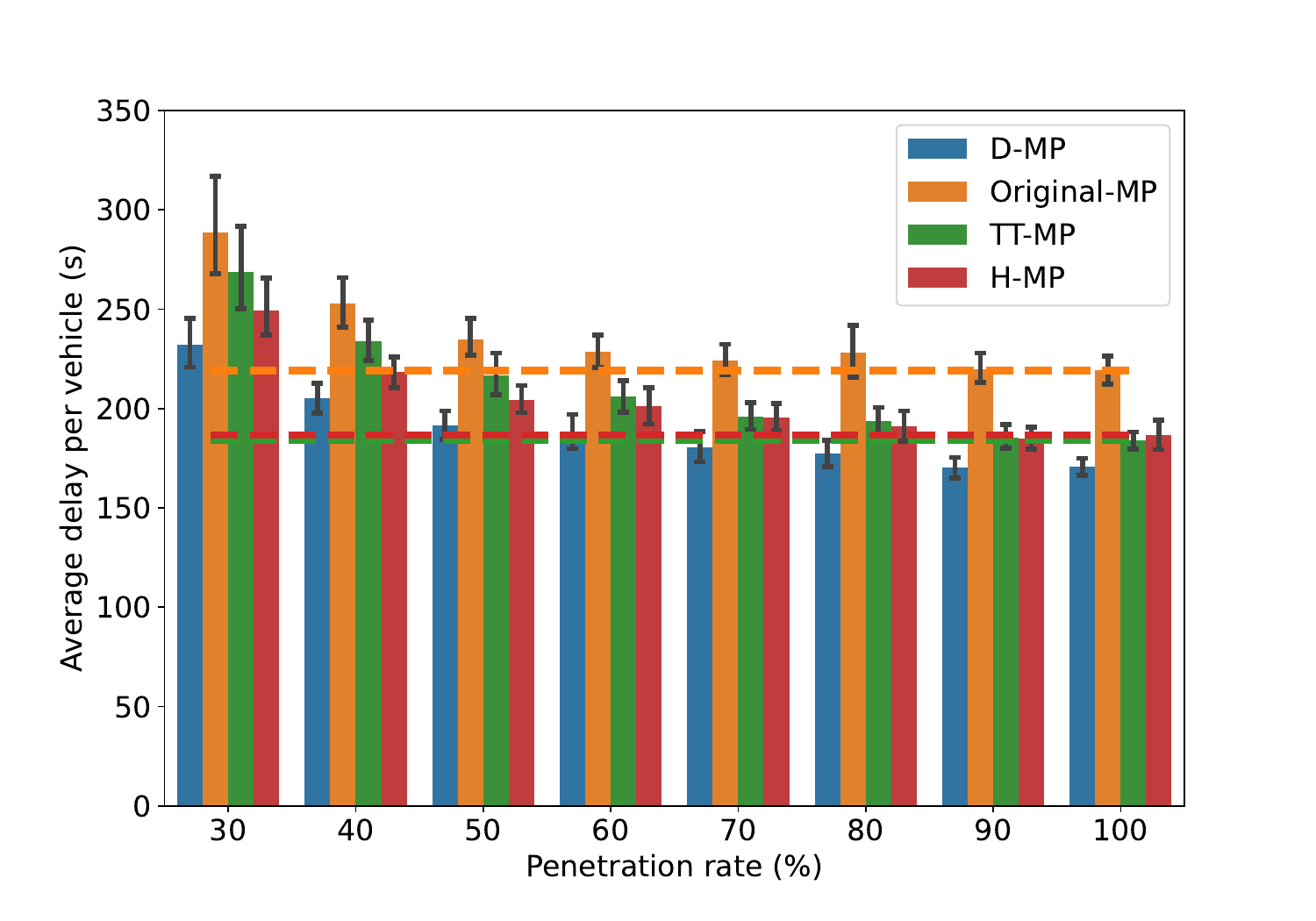}
         \caption{High demand.}
         \label{fig:pr_high}
     \end{subfigure}
     \caption{Effect of penetration rate on control performance.}
     \label{fig:pr}
\end{figure}

Figure \ref{fig:pr} \hl{shows the average delay per vehicle from D-MP and all benchmark models with optimal time steps for each under various penetration rates of connected vehicles. The results with penetration rate equal to 10\% are removed since both the average value and standard deviation are too large, which implies the control performance is poor under extremely low penetration rate. Note that the results with penetration rate equal to 20\% under high demand condition are also removed since the standard deviation from} Original-MP \hl{and TT-MP is too large. The dashed horizontal lines indicate the average delay from the benchmark models in a fully connected environment, i.e., penetration rate equals 100\%. As expected, both the average delay and variation of delay from all models decrease with the rise in CV penetration rate. In addition, this effect is more significant when the penetration rate is low. For example, the change in delay when penetration rate rises from 20\% to 30\% is much larger than when it rises from 90\% to 100\%. The results reveal that under low demand conditions, D-MP outperforms both} Original-MP \hl{and TT-MP for all penetration rates. When the penetration rate exceeds 50\%, D-MP prevails over} Original-MP \hl{and H-MP in a fully connected environment. H-MP has lower delay than D-MP with all penetration rates under low demand conditions. However, D-MP eventually outperforms H-MP as demand increases. Under medium demand conditions, D-MP has lower delay when the penetration rate exceeds 60\%, and the delay from D-MP when the penetration rate exceeds 80\% is lower than H-MP in a fully connected environment. Under high demand conditions, this threshold decreases to 70\%. For simplicity, we omitted the comparison of other metrics because they lead to a similar conclusion.}

% This sections shows the control performance with different penetration rate (period=9s).
% \begin{figure}[!t]
%      \centering
%      \begin{subfigure}{0.45\textwidth}
%          \centering
%          \includegraphics[width=\textwidth]{}
%          \caption{low low demand}
%          \label{fig:y equals x}
%      \end{subfigure}
%      \begin{subfigure}{0.45\textwidth}
%          \centering
%          \includegraphics[width=\textwidth]{}
%          \caption{low demand}
%          \label{fig:three sin x}
%      \end{subfigure}
%      \hfill
%      \begin{subfigure}{0.45\textwidth}
%          \centering
%          \includegraphics[width=\textwidth]{}
%          \caption{medium demand}
%          \label{fig:five over x}
%      \end{subfigure}
%         \caption{Effect of penetration rate}
%         \label{fig:three graphs}
% \end{figure}\\
% \begin{itemize} \item test effect of penetration rate on specific metrics such as delay, queue lengths, average throughput on fixed demand scenarios and find the threshold \end{itemize}

\section{Conclusion}\label{sc:cc}
This paper proposes a novel MP network-wide signal control algorithm, which uses the total travel delay in the previous time step to calculate weight for movements. \hl{With the development and ubiquitous of probing vehicles or mobile devices, real-time travel delay can be readily measured at a low cost, which makes the proposed model feasible in practical.} Under the assumption of infinite queue capacity, the model is analytically proven to maximize the network throughput, which is the key property of the Original-MP \citep{varaiya2013max}. The performance of the proposed variant is compared with other MP frameworks using the microscopic traffic simulation in SUMO.  First, we showed that time step for decision-making  plays a key role on the model performance and selected the optimal time steps for each model according to the simulation results. Then, we applied all the models to a typical time-varying traffic condition. \hl{Overall, the proposed model outperforms three benchmark models:} Original-MP, \hl{TT-MP and H-MP which use the number of vehicles on a link, total travel time and number of stopped (halting) vehicles, respectively, as the metric to define weight and pressure,} in average delay, blocked vehicles and network throughput, although H-MP has a slightly better performance than the proposed model in low traffic conditions. One benefit of the proposed delay-based model is that it can be implemented using measurements taken from a subset of vehicles. Thus, we explored the results if the required metrics for the models are not fully known by assuming a CV environment. The control performance of the proposed model was tested under different penetration rates. As expected, a higher penetration rate leads to a better control performance. We also studied the threshold of the penetration rate at which the proposed model is better than the benchmark models under a full CV environment. The threshold values are shown to be decreasing with the increasing in demand.

One shortcoming of the proposed model is that it activates phases in arbitrary orders with arbitrary durations (which have to the multiples of time step size). As is pointed out by \citep{levin2020max}, this arbitrary phase selection can be confusing to drivers and thus unacceptable for some city engineers. \citep{levin2020max} addressed this issue, to some extent, by restricting the cycle length to be upper bounded by a pre-defined value and fixing the phase sequence. However, since the cycle length changes from cycle to cycle and from intersection to intersection, the signals are not coordinated. To this end, it is meaningful to develop a coordination strategy combining with the proposed D-MP to improve its practicability. In addition, this model only tackles the signal control inside a network. To develop a framework that combines the perimeter control at the boundary of a network and the proposed model for the inside signals is another interesting direction. 
\section*{Acknowledgement}
This research was supported by NSF Grant CMMI-1749200.
\begin{appendix}
\section{Time step selection for the benchmark models}\label{app:ts}
Figure \ref{fig:timestep_QMP} to Figure \ref{fig:timestep_HMP} show the influence of time step size on three metrics: average delay per vehicle, average queue length per link and average throughput, of three benchmark models. Based on the results, we use 9s, 9s and 5s as the optimal time steps for Original-MP, TT-MP and H-MP, respectively.
\begin{figure}[!htbp]
     \centering
     \begin{subfigure}{0.32\textwidth}
         \centering
         \includegraphics[width=\textwidth]{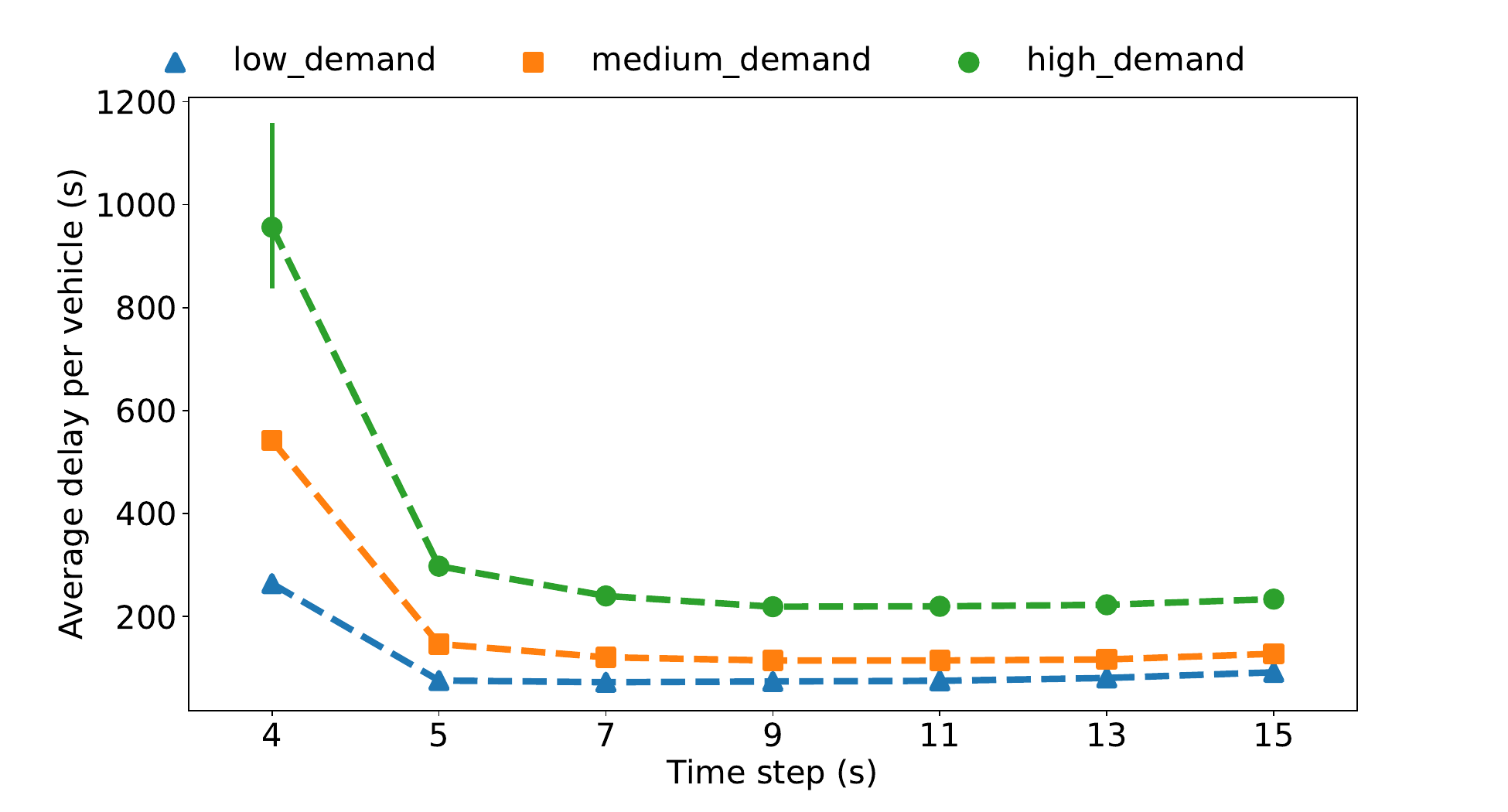}
         \caption{Average delay.}
     \end{subfigure}
     \begin{subfigure}{0.32\textwidth}
         \centering
         \includegraphics[width=\textwidth]{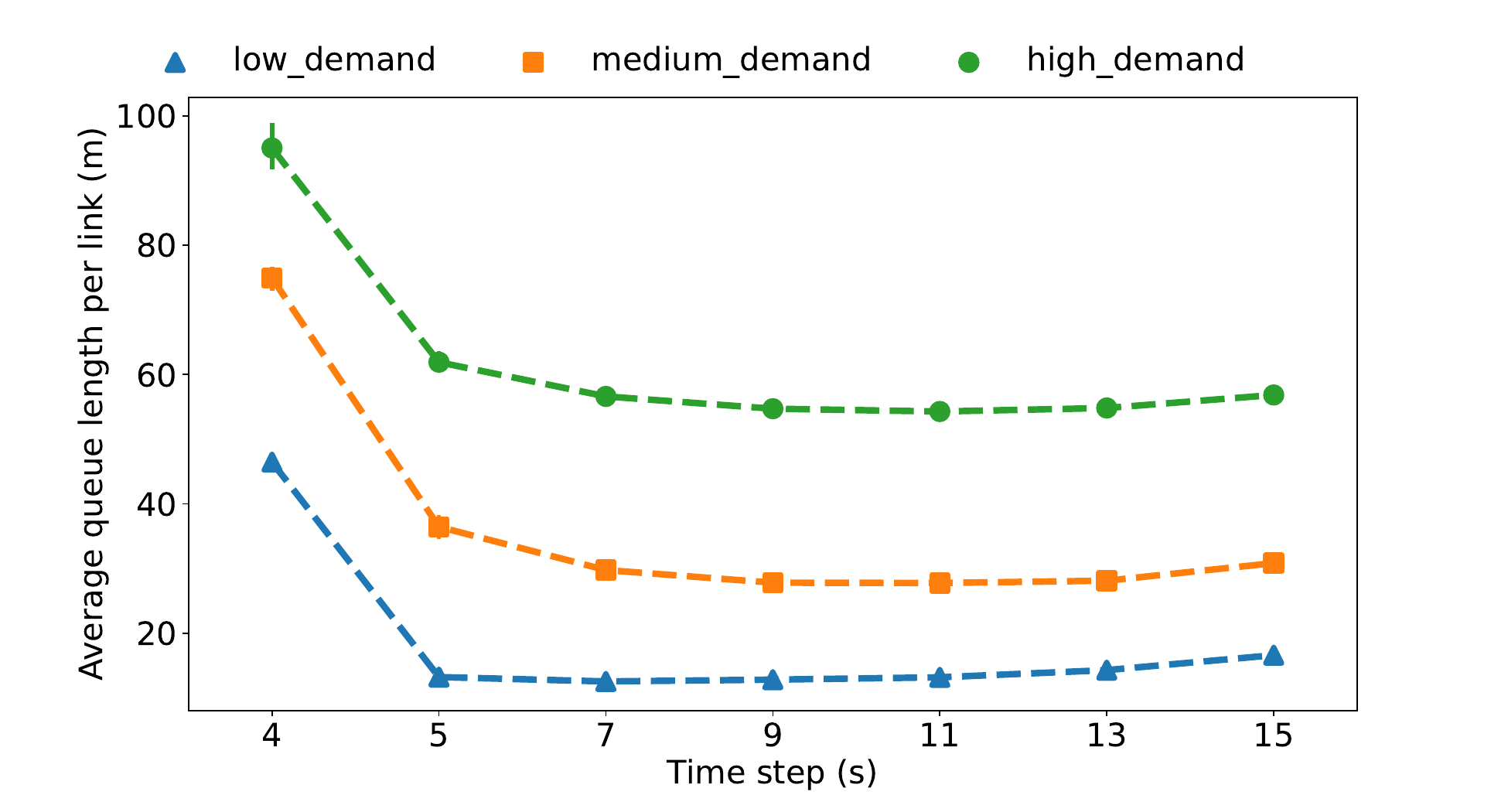}
         \caption{Average queue.}
     \end{subfigure}
     \begin{subfigure}{0.32\textwidth}
         \centering
         \includegraphics[width=\textwidth]{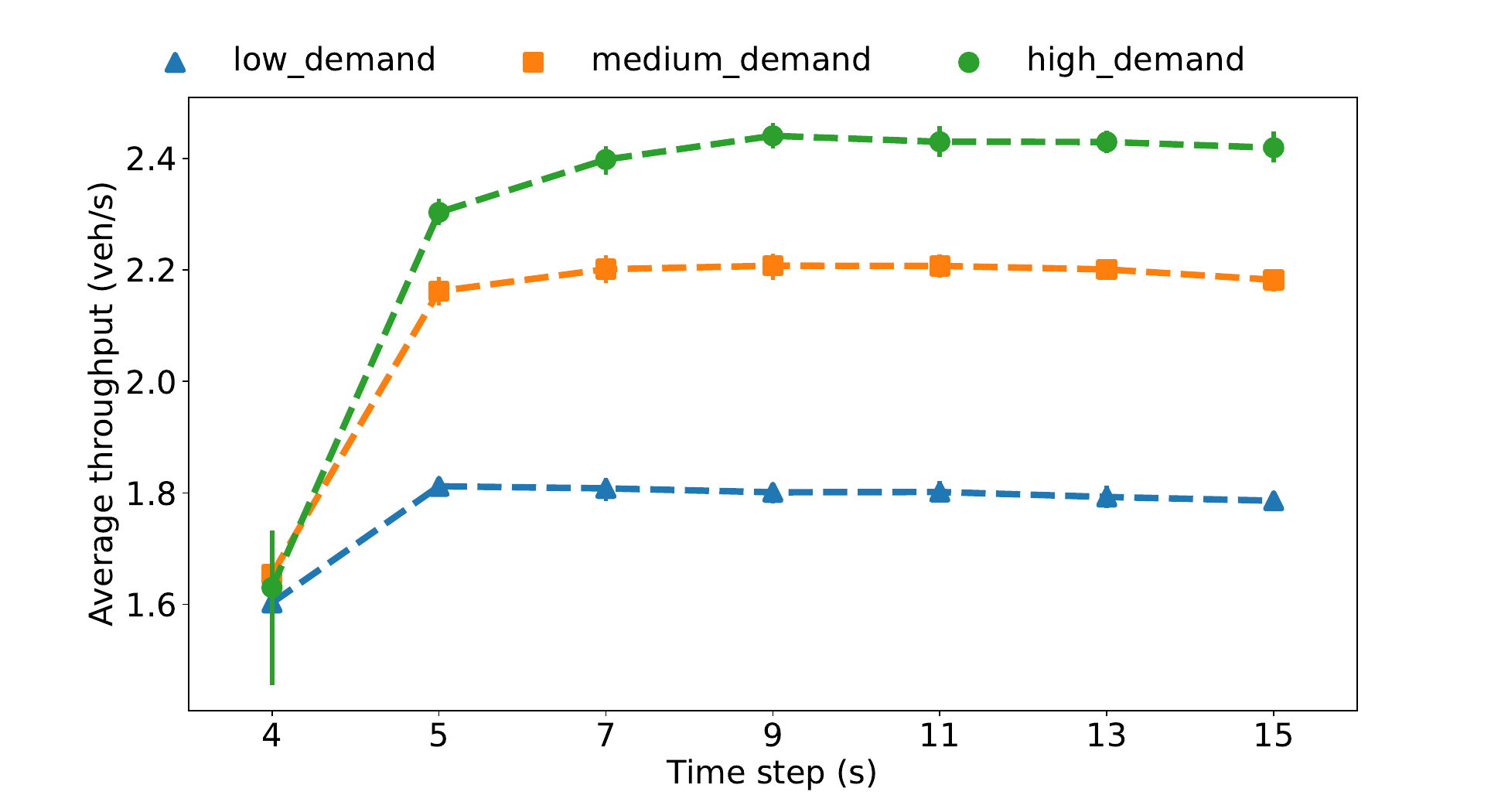}
         \caption{Average throughput.}
     \end{subfigure}
     \caption{Influence of time steps on Original-MP.}
     \label{fig:timestep_QMP}
\end{figure}
\begin{figure}[!htbp]
     \centering
     \begin{subfigure}{0.32\textwidth}
         \centering
         \includegraphics[width=\textwidth]{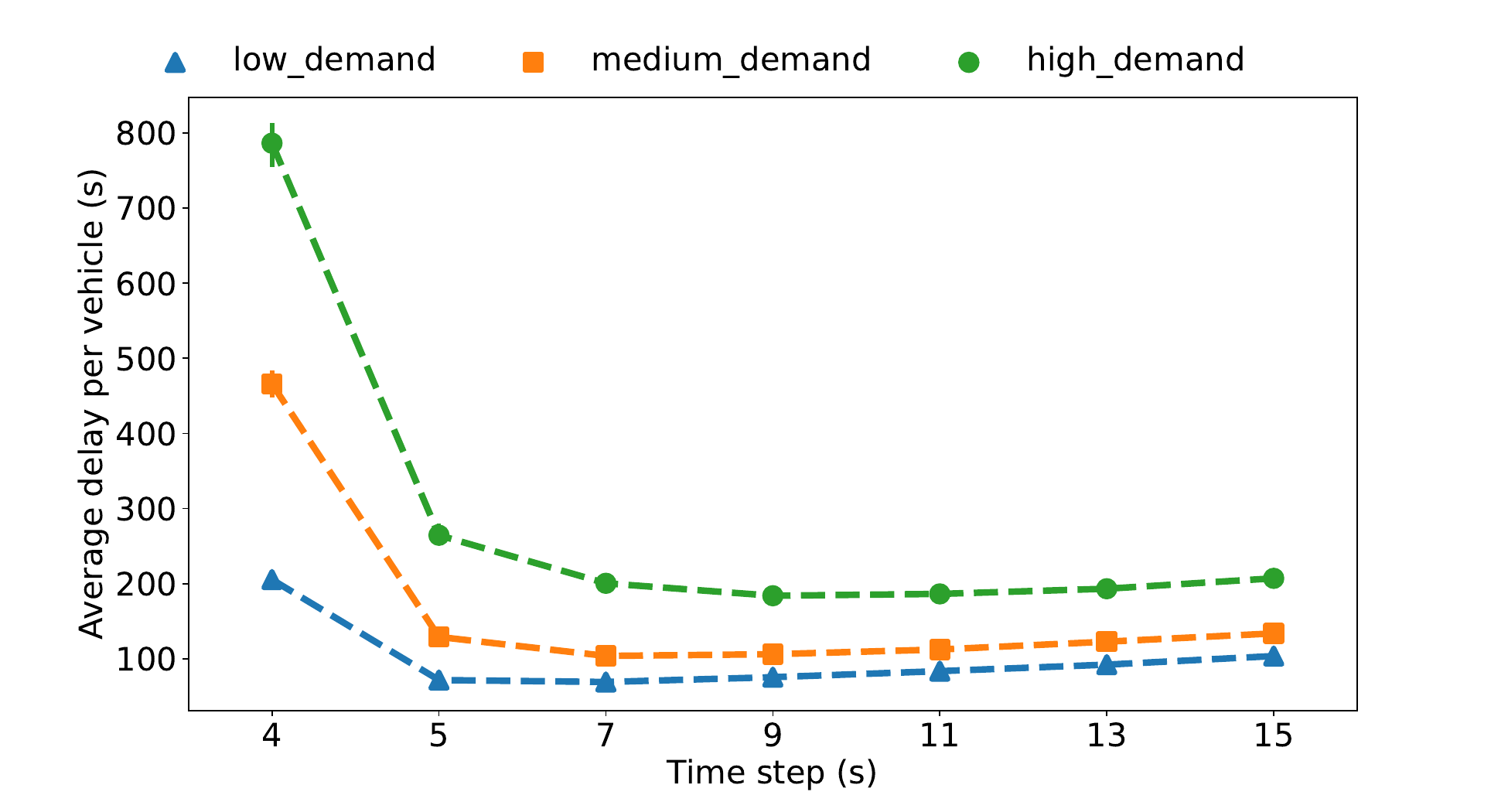}
         \caption{Average delay.}
     \end{subfigure}
     \begin{subfigure}{0.32\textwidth}
         \centering
         \includegraphics[width=\textwidth]{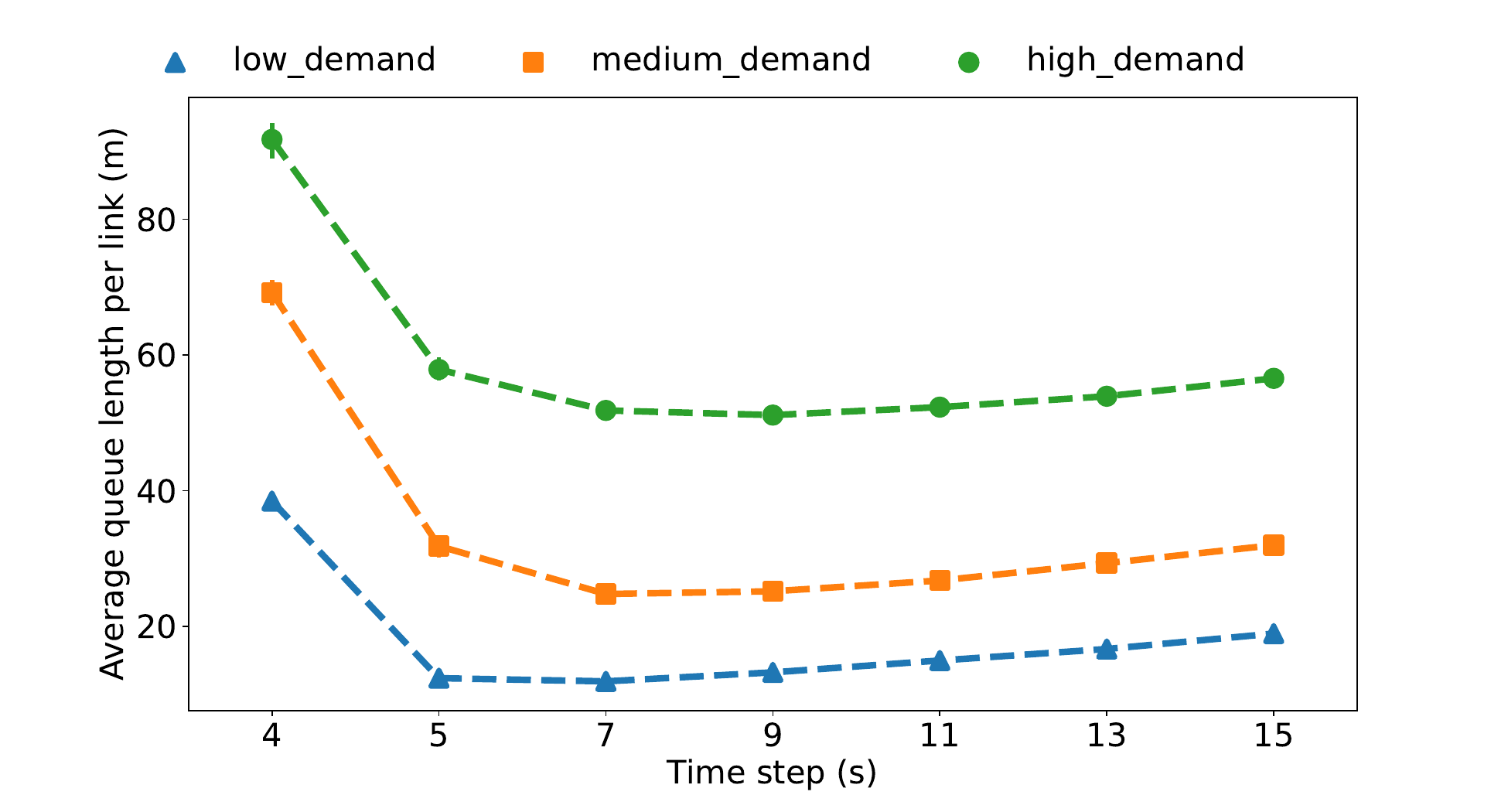}
         \caption{Average queue.}
     \end{subfigure}
     \begin{subfigure}{0.32\textwidth}
         \centering
         \includegraphics[width=\textwidth]{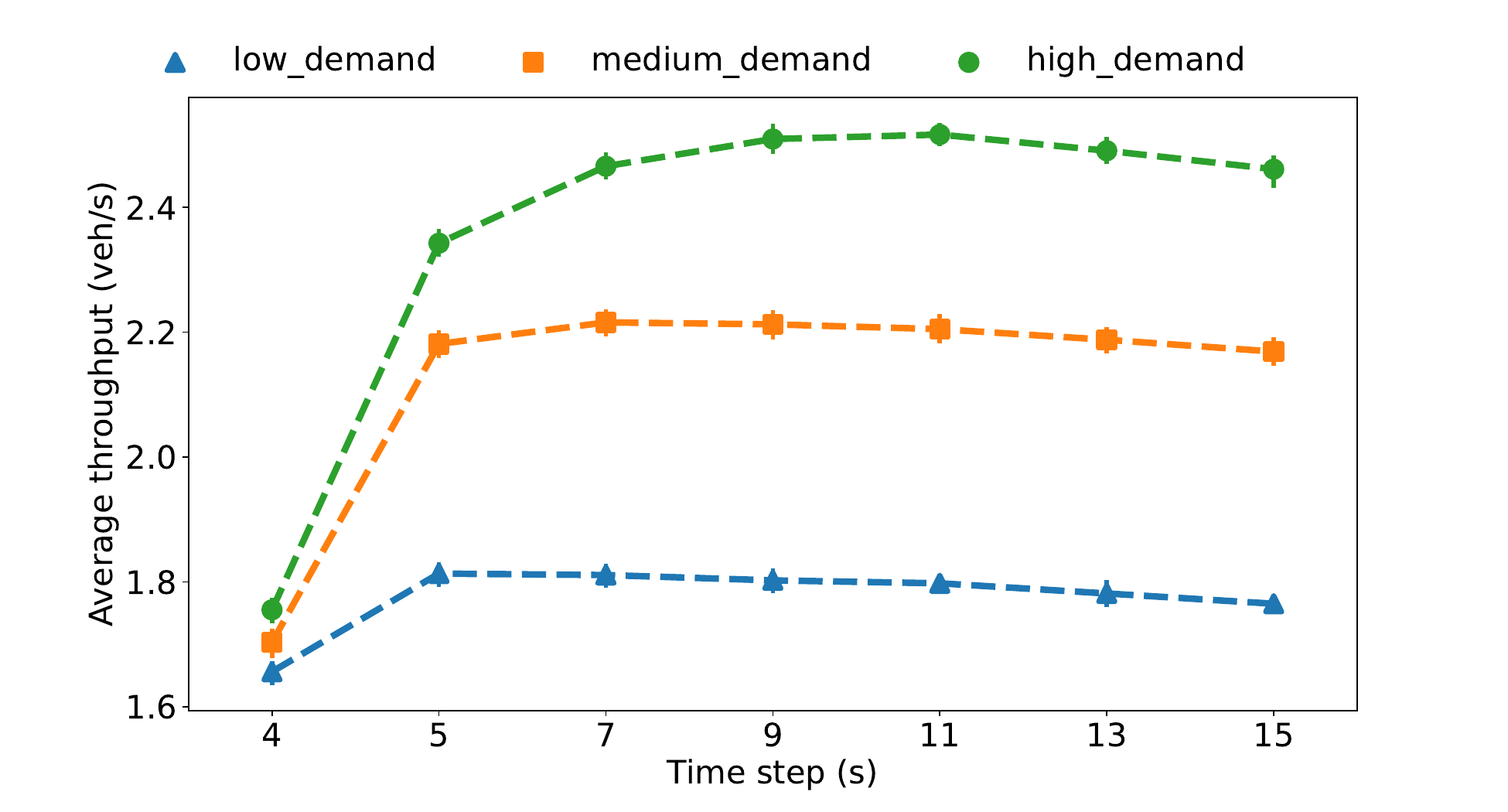}
         \caption{Average throughput.}
     \end{subfigure}
     \caption{Influence of time steps on TT-MP.}
     \label{fig:timestep_TTMP}
\end{figure}
\begin{figure}[!htbp]
     \centering
     \begin{subfigure}{0.32\textwidth}
         \centering
         \includegraphics[width=\textwidth]{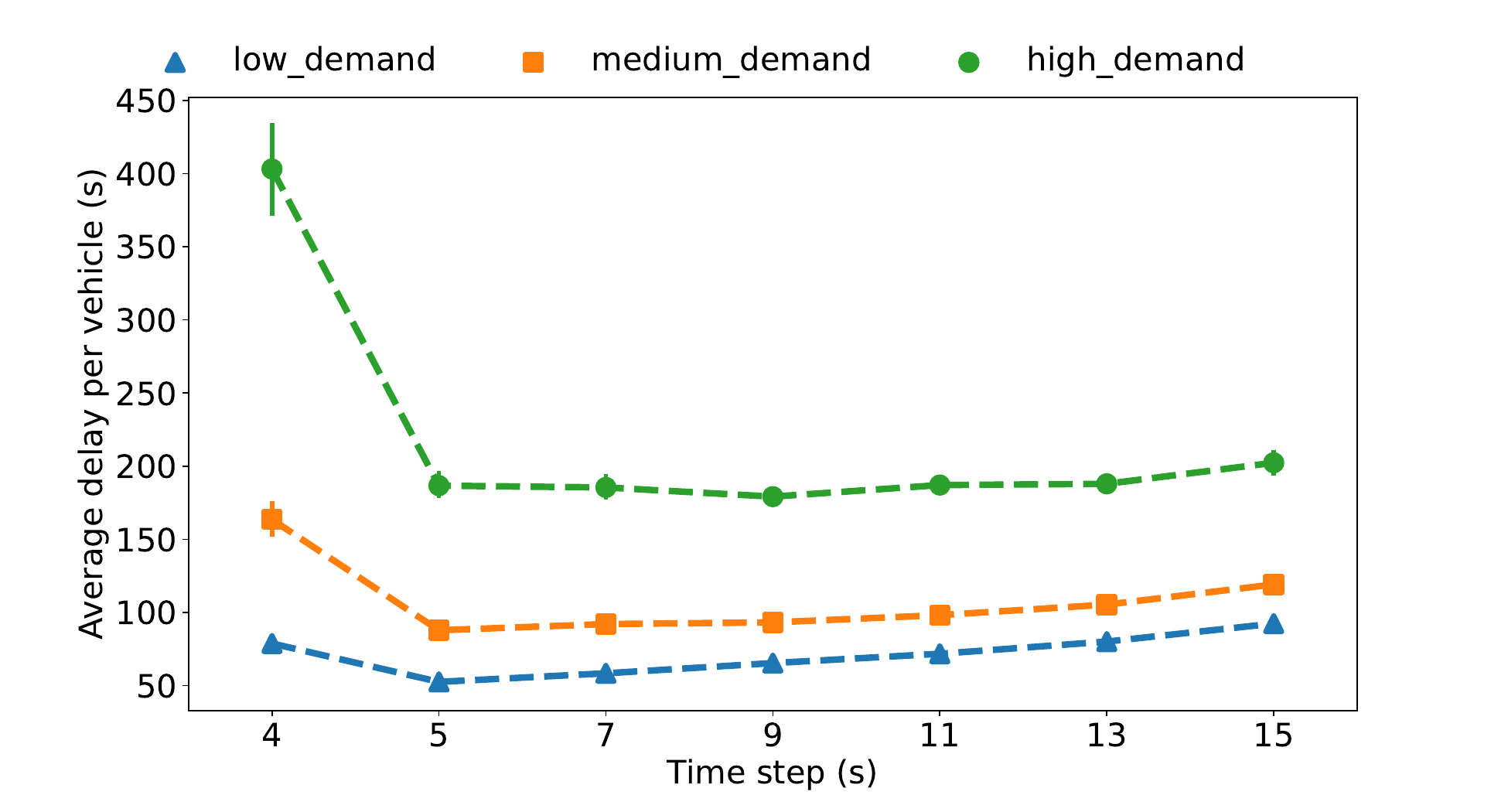}
         \caption{Average delay.}
     \end{subfigure}
     \begin{subfigure}{0.32\textwidth}
         \centering
         \includegraphics[width=\textwidth]{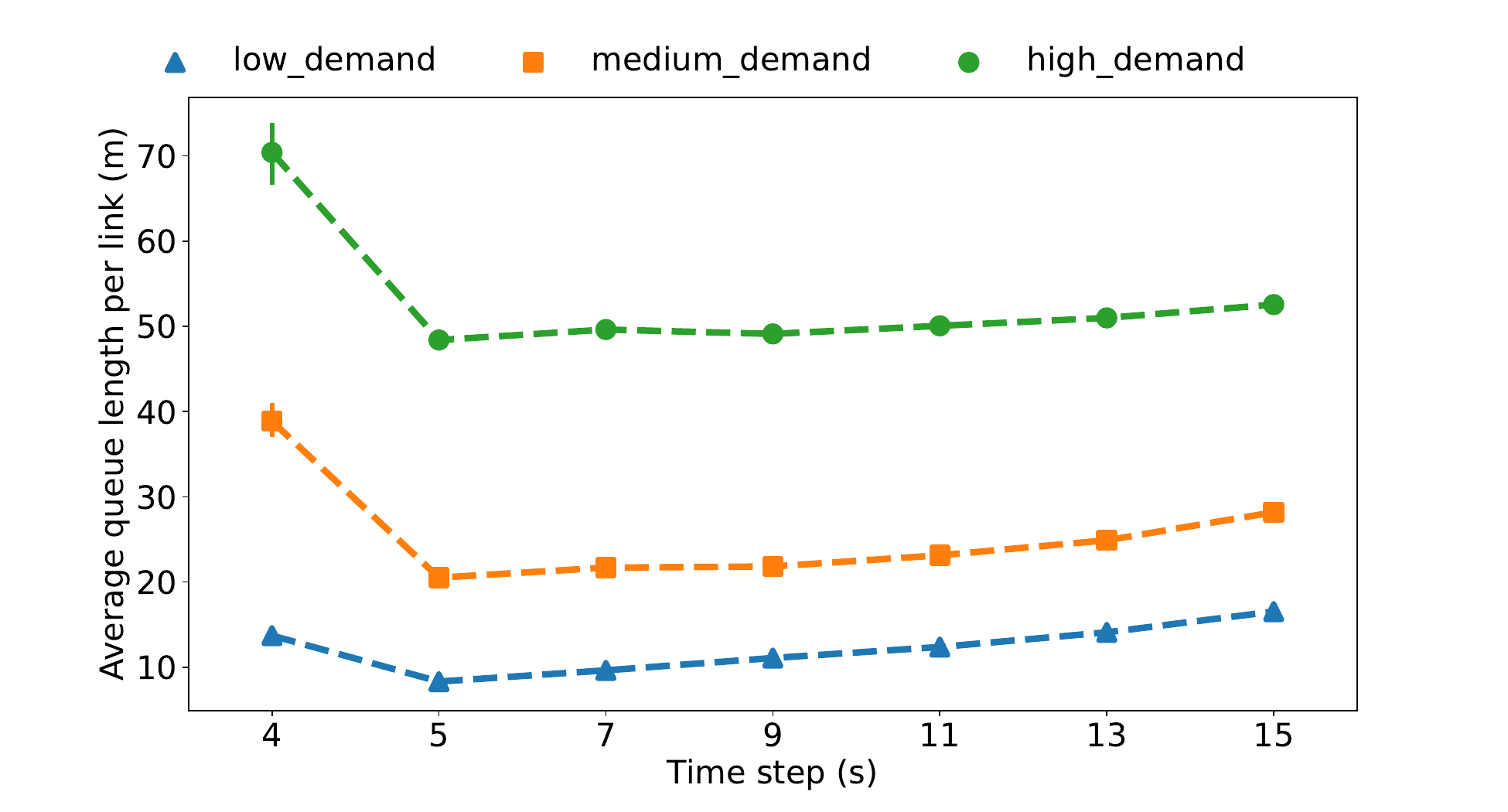}
         \caption{Average queue.}
     \end{subfigure}
     \begin{subfigure}{0.32\textwidth}
         \centering
         \includegraphics[width=\textwidth]{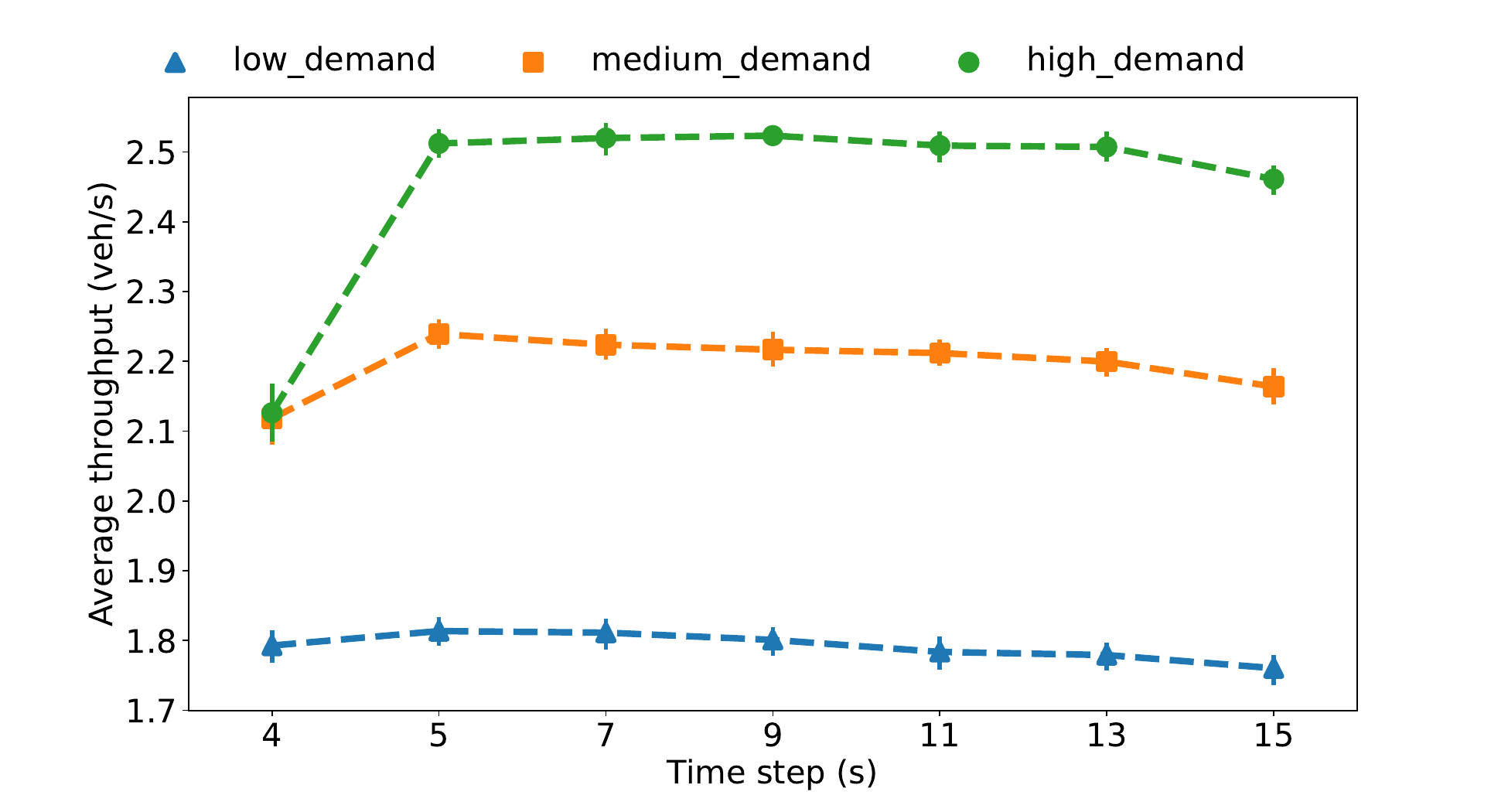}
         \caption{Average throughput.}
     \end{subfigure}
     \caption{Influence of time steps on H-MP.}
     \label{fig:timestep_HMP}
\end{figure}
\end{appendix}
\FloatBarrier
\bibliographystyle{cas-model2-names}

% Loading bibliography database
\bibliography{revised_2ndround}

%\vskip3pt

% \bio{}
% Author biography without author photo.
% Author biography. Author biography. Author biography.
% Author biography. Author biography. Author biography.
% Author biography. Author biography. Author biography.
% Author biography. Author biography. Author biography.
% Author biography. Author biography. Author biography.
% Author biography. Author biography. Author biography.
% Author biography. Author biography. Author biography.
% Author biography. Author biography. Author biography.
% Author biography. Author biography. Author biography.
% \endbio

% \bio{figs/pic1}
% Author biography with author photo.
% Author biography. Author biography. Author biography.
% Author biography. Author biography. Author biography.
% Author biography. Author biography. Author biography.
% Author biography. Author biography. Author biography.
% Author biography. Author biography. Author biography.
% Author biography. Author biography. Author biography.
% Author biography. Author biography. Author biography.
% Author biography. Author biography. Author biography.
% Author biography. Author biography. Author biography.
% \endbio

% \bio{figs/pic1}
% Author biography with author photo.
% Author biography. Author biography. Author biography.
% Author biography. Author biography. Author biography.
% Author biography. Author biography. Author biography.
% Author biography. Author biography. Author biography.
% \endbio

\end{document}